\theoremstyle{definition}
\newtheorem{thm}{Theorem}[section]
\newtheorem{prop}[thm]{Proposition}
\newtheorem{coro}[thm]{Corollary}
\newtheorem{lem}[thm]{Lemma}
\newtheorem{rem}[thm]{Remark}
\newtheorem{defn}[thm]{Definition}
\newtheorem{exmp}[thm]{Example}
\UseComputerModernTips \knottips{FF}
\def\Bracket#1{\mathord{\Big\langle\,~ \raise8pt\xybox{0;/r1.3pc/:#1}\,
~\Big\rangle}}
\def\nnewBracket#1{\mathord{\Bigl[\,~ \raise8pt\xybox{0;/r1.3pc/:#1}\,
\Bigr]}}
\def\doubleBracket#1{\mathord{\Bigl[\Bigl[\,~~ \raise8pt\xybox{0;/r1.3pc/:#1}\,
~~\Bigr]\Bigr]}}
\def\tBracket#1{\mathord{\Big(\,~\raise8pt\xybox{0;/r1.3pc/:#1}\,
\Big)}} 
\def\eBracket#1{\mathord{\,~\raise8pt\xybox{0;/r1.1pc/:#1}}}
\def\Fig#1{\mathord{\,~ \raise-10pt\xybox{0;/r0.15pc/:#1}\,
~}}
\begin{document}


\title{Applying Lipson's state models to marked graph diagrams of surface-links}

\author{Yewon Joung
\\{\it\small Department of Mathematics, Graduate School of Natural Sciences,}\\ {\it\small Pusan National University, Busan 609-735, Korea}
\\{\it\small yewon112@pusan.ac.kr}\\\\
Seiichi Kamada
\\{\it\small Department of Mathematics, Osaka City University,}\\
{\it Osaka 558-8585, Japan}\\{\it\small skamada@sci.osaka-cu.ac.jp}\\\\
Sang Youl Lee
\\{\it\small Department of Mathematics, Pusan National University,}\\
{\it Busan 609-735, Korea}\\{\it\small sangyoul@pusan.ac.kr}}

\maketitle

\begin{abstract}
A. S. Lipson constructed two state models yielding the same classical link invariant obtained from the Kauffman polynomial $F(a,z)$. In this paper, we apply Lipson's state models to marked graph diagrams of surface-links, and observe when they induce surface-link invariants.
\end{abstract}

\noindent{\it Mathematics Subject Classification 2000}: 57Q45; 57M25.

\noindent{\it Key words and phrases}: marked graph diagram;  knotted surface; surface-link; state model; invariant of surface-link; Yoshikawa moves.

\section{Introduction}

A marked graph diagram is a link diagram possibly with some $4$-valent vertices equipped with markers.
S. J. Lomonaco, Jr. \cite{Lo} and K. Yoshikawa \cite{Yo} introduced a
method of describing surface-links by marked graph diagrams.  Yoshikawa \cite{Yo}
studied surface-links via such diagrams and made a table of surface-links with ``ch-index''  ten or less.
M. Soma \cite{So} studied surface-links described by marked graph diagrams of square-type, and constructed some interesting series of surface-links.  Local moves on marked graph diagrams introduced in Yoshikawa's paper  \cite{Yo} are so-called {\it Yoshikawa moves}.  It is known that two marked graph diagrams present equivalent surface-links if and only if they are related by a finite sequence of Yoshikawa moves \cite{KK, Sw}.  So one can use marked graph diagrams in order to define surface-link invariants.  The third author gave a framework to construct  invariants of surface-links from classical link invariants via marked graph diagrams in \cite{Le2, Le1}. Especially he considered invariants derived from a skein relation in \cite{Le4}. The first author, the third author and J. Kim \cite{JKL} defined ideal coset invariants for surface-links.

In this paper, we review Lipson's state-sum invariants $R_D$ and $S_D$ of classical links from \cite{ASL}. In classical case, he defined a $[\cdot]$-state of $D$ to be a labelling of each connected component of $\widehat{D} := D \setminus \{ \mbox{crossings} \}$
with either $1$ or $2$. A $[\cdot]$-state is {\it legal} if at each crossing of the diagram, each label occurs an even number of times.
For each legal state $\sigma$,
he assigned the value $\nu(c, \sigma)$ to each crossing $c$ as shown in Fig.~\ref{fig-legal}, and defined
$[D]$ by
$$[D] = [D](x,y,z,w) = \sum_{\text{legal states}} \prod_{\text{crossings}} \nu(c, \sigma).$$
He found two conditions on $x, y, z, w$ for the state-sum $[D]$ to be invariant under Reidemeister moves $\Omega_2$ and $\Omega_3$ (see Fig.~\ref{fig-moves-type-I}).
After normalizing $[D]$, he had the state-sum invariants $R_D$ and $S_D$.
We generalize his state models to marked graph diagrams and investigate conditions so that the state-sum can be used for invariants of surface-links.  Then we obtain two invariants $R'_D$ and $S'_D$ of surface-links.

We also consider another state-sum $Q_D$ that is  invariant under all Yoshikawa moves except the moves $\Omega_4$ and $\Omega_4'$ (see Fig.~\ref{fig-moves-type-I}).  It gives an obstruction for Yoshikawa moves $\Omega_4$ and $\Omega_4'$.

This paper is organized as follows. In Section 2, we prepare some preliminaries about marked graph diagrams of surface-links and Yoshikawa moves. In Section 3, we review Lipson's state-sum invariants $R_D$ and $S_D$ of classical links. In Section 4, we first define the state-sum $[D]$ of a marked graph diagram $D$ and then generalize  Lipson's state-sum invariants to invariants for surface-links in $\mathbb R^4$, denoted by
$R'_D$ and $S'_D$. In Section 5, we study these invariants. In Section 6, we show that the state-sum $Q_D$ is an obstruction for Yoshikawa moves $\Omega_4$ and $\Omega_4'$.


\section{Marked graph diagrams}
\label{sect-mgd}

In this section, we review the method of describing surface-links by marked graph diagrams.
By a {\it surface-link} $\mathcal L$ we mean mutually disjoint connected and closed (possibly orientable or nonorientable) surfaces smoothly (or piecewise linearly and locally flatly) embedded in the $4$-space $\mathbb R^4$.  When it is connected, we also call it a {\it surface-knot}. Two surface-links are said to be  {\it equivalent} if they are ambient isotopic in $\mathbb R^4$.


A {\it marked graph} is a spatial graph $G$ in $\mathbb R^3$ which satisfies the following:
\begin{itemize}
  \item [(1)] $G$ is a finite regular graph with $4$-valent vertices, say $v_1, v_2, . . . , v_n$.
  \item [(2)] Each $v_i$ is rigid; that is, we fix a rectangular neighborhood $N_i \approx \{(x, y)|-1 \leq x, y \leq 1\},$ where $v_i$ is the origin and the edges incident to $v_i$ are presented by $x^2 = y^2$.
  \item [(3)] Each $v_i$ has a {\it marker}, which is the interval on $N_i$ given by  $\{(x, 0)|-1 \leq x \leq 1\}$.
\end{itemize}

Two  marked graphs are said to be {\it equivalent} if they are ambient isotopic in $\mathbb R^3$ with keeping the rectangular neighborhood and the marker at each vertex.

An {\it orientation} of a marked graph $G$ is a choice of an orientation for each edge of $G$ in such a way that every rigid vertex in $G$ looks like
\xy (-5,5);(5,-5) **@{-},
(5,5);(-5,-5) **@{-},
(3,3.2)*{\llcorner},
(-3,-3.4)*{\urcorner},
(-2.5,2)*{\ulcorner},
(2.5,-2.4)*{\lrcorner},
(3,-0.2);(-3,-0.2) **@{-},
(3,0);(-3,0) **@{-},
(3,0.2);(-3,0.2) **@{-},
\endxy or
\xy (-5,5);(5,-5) **@{-},
(5,5);(-5,-5) **@{-},
(2.5,2)*{\urcorner},
(-2.5,-2.2)*{\llcorner},
(-3.2,3)*{\lrcorner},
(3,-3.4)*{\ulcorner},
(3,-0.2);(-3,-0.2) **@{-},
(3,0);(-3,0) **@{-},
(3,0.2);(-3,0.2) **@{-}
\endxy
, up to rotation.
A marked graph $G$ is said to be {\it orientable} if $G$ admits an orientation. Otherwise, it is said to be {\it non-orientable}. Note that there is a non-orientable marked graph (see Fig.~\ref{fig-nori-mg}). By an {\it oriented marked graph} we mean an orientable marked graph with a fixed orientation.

\begin{figure}[h]
\begin{center}
\resizebox{0.25\textwidth}{!}{%
  \includegraphics{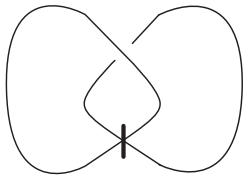}}
\caption{A non-orientable marked graph}\label{fig-nori-mg}
\end{center}
\end{figure}

For $t \in \mathbb R,$ we denote by $\mathbb R^3_t$ the hyperplane of $\mathbb R^4$ whose fourth coordinate
is $t \in \mathbb R$, i.e., $\mathbb R^3_t := \{(x_1, x_2, x_3, x_4) \in
\mathbb R^4~|~ x_4 = t \}$. A surface-link $\mathcal L$ in $\mathbb R^4=\mathbb R^3 \times \mathbb R$ can be described in terms of its {\it cross-sections} $\mathcal L \cap \mathbb R^3_t$ for $t \in \mathbb R$ (cf. \cite{Fox}).
It is  known \cite{Ka1,KSS,KK,Lo} that any surface-link $\mathcal L$ can be deformed into a surface-link $\mathcal L'$, called a {\it hyperbolic splitting} of $\mathcal L$,
by an ambient isotopy of $\mathbb R^4$ in such a way that
the projection $p: \mathcal L' \to \mathbb R$ to the fourth coordinate satisfies the following:
\begin{itemize}
\item[(1)] All critical points are non-degenerate.
\item[(2)] All the index 0 critical points (minimal points) are in $\mathbb R^3_{-1}.$
\item[(3)] All the index 1 critical points (saddle points) are in $\mathbb R^3_0$.
\item[(4)] All the index 2 critical points (maximal points) are in $\mathbb R^3_1$.
\end{itemize}

Let $\mathcal L$ be a surface-link in $\mathbb R^4$ and let ${\mathcal L'}$ be a hyperbolic splitting of $\mathcal L.$
Then the cross-section $\mathcal L' \cap \mathbb R^3_0$ at $t=0$ is a $4$-valent graph in $\mathbb R^3_0$. We give a marker at each $4$-valent vertex (saddle point) that indicates how the saddle point opens up above as shown in Fig.~\ref{sleesan2:fig1}. The resulting marked graph is called a {\it marked graph} presenting $\mathcal L$.
It is usually described by a diagram
on $\mathbb R^2$ called a {\it marked graph diagram} or a {\it ch-diagram} (cf. \cite{So}).

\begin{figure}[h]
\begin{center}
\resizebox{0.60\textwidth}{!}{%
  \includegraphics{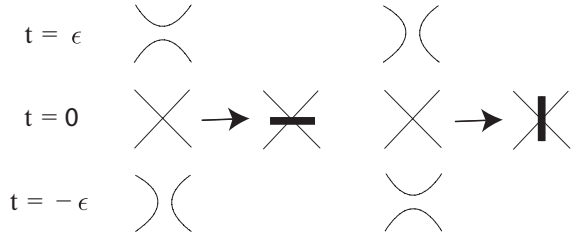} }
\caption{Marking of a vertex} \label{sleesan2:fig1}
\end{center}
\end{figure}

A {\it banded link} $\mathcal {BL}$ in $\mathbb R^3$ is a pair $(L, \mathcal B)$ consisting of a link $L$ in $\mathbb R^3$ and a set $\mathcal B=\{B_1, \ldots, B_n\}$ of mutually disjoint $n$ bands $B_i$ spanning $L$.

Let $\mathcal {BL} =(L, \mathcal B)$ be a banded link.  By an ambient isotopy of $\mathbb R^3$, we shorten the bands so that each band is contained in a small $2$-disk.  Replacing the neighborhood of each band to the neighborhood of a marked vertex
as in Fig.~\ref{fig-orbd-2}, we obtain a marked graph, called a {\it marked graph associated with }$\mathcal {BL}$.
Conversely when a marked graph $G$ in $\mathbb R^3$ is given, by  replacing each marked vertex with a band as in Fig.~\ref{fig-orbd-2}, we obtain a banded link $\mathcal {BL}(G)$, called a {\it banded link associated with } $G$.

\begin{figure}[h]
\begin{center}
\resizebox{0.50\textwidth}{!}{%
  \includegraphics{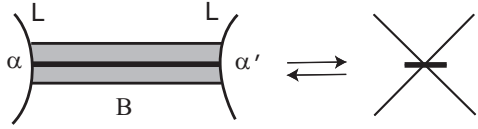} }
\caption{A band and a marked vertex}\label{fig-orbd-2}
\end{center}
\end{figure}

Let $G$ be a marked graph and $\mathcal {BL}(G)$ a banded link associated with $G$.
We denote by $L_-(G)$ the link $L$, and by $L_+(G)$ the link obtained from $L$ by surgery along the bands $\mathcal B$.  Moreover, when $G$ is described by a marked graph diagram $D$, then $\mathcal {BL}(G)$ is also called a {\it banded link associated with} $D$, and denoted by $\mathcal {BL}(D)$. $L_-(G)$ (or $L_+(G)$, resp.) is denoted by $L_-(D)$ (or $L_+(D)$, resp.).  We call $L_-(D)$ the {\it negative resolution} and $L_+(D)$ the {\it positive resolution} of $D$.
A diagram of $L_-(D)$ (or $L_+(D)$, resp.) is obtained from $D$ by smoothing the marked vertices, which we call the
{\it negative resolution diagram} (or {\it positive resolution diagram}, resp.) of $D$.

Fig.~\ref{spun-mgraph-res} shows an example of a marked graph diagram $D$ and its associated banded link,
and the negative and positive resolutions.

A marked graph diagram $D$ is said to be {\it admissible} if both resolutions $L_-(D)$ and $L_+(D)$ are trivial links.

\begin{figure}[h]
\begin{center}
\resizebox{0.75\textwidth}{!}{%
  \includegraphics{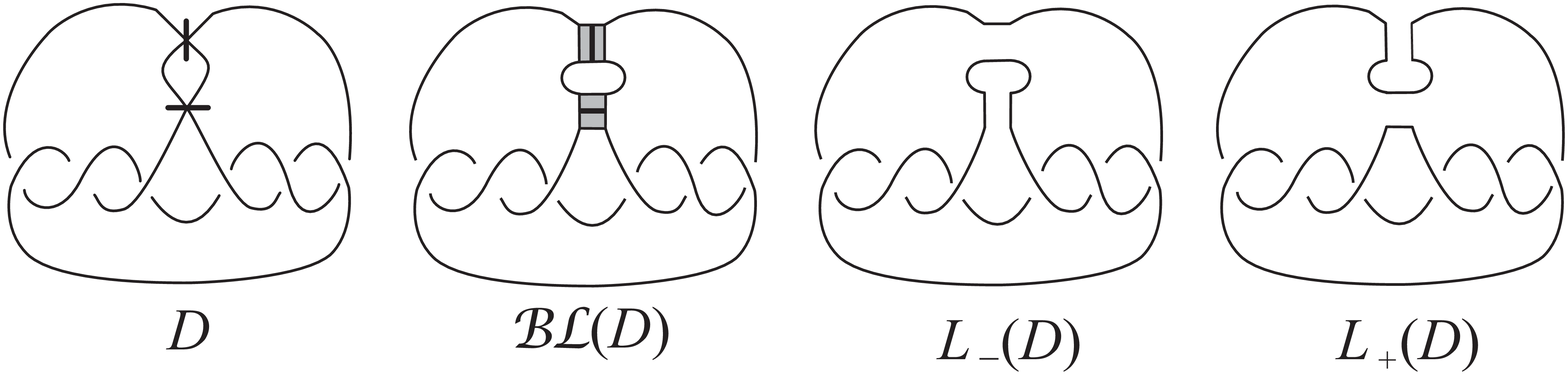} }
\caption{A marked graph diagram, its associated banded link, and resolutions}
\label{spun-mgraph-res}
\end{center}
\end{figure}

We recall how to construct a surface-link from a given admissible marked graph diagram (cf.  \cite{Ka2,Kaw,KSS,Yo}).

Let $D$ be a marked graph diagram.
Let $\mathcal {BL}(D) =(L, \mathcal B)$ be its associated banded link.
Let $\mathcal B$ consist of bands $B_1, \dots, B_n$.
We define a surface $F(D) \subset \mathbb R^3 \times [-1,1]$ by

\begin{equation*}
(\mathbb R^3_t, F(D) \cap \mathbb R^3_t)=\left\{%
\begin{array}{ll}
    (\mathbb R^3, L_+(D)) & \hbox{for $0 < t \leq 1$,} \\
    \bigg( \mathbb R^3, L_-(D) \cup \bigg( \bigcup_{i=1}^n B_i \bigg) \bigg) & \hbox{for $t = 0$,} \\
    (\mathbb R^3, L_-(D)) & \hbox{for $-1 \leq t < 0$.} \\
\end{array}%
\right.
\end{equation*}

We call $F(D)$ the {\it proper surface associated with} $D$.

It is known that $D$ is orientable if and only if $F(D)$ is an orientable surface.  When $D$ is oriented, the resolutions
$L_-(D)$ and $L_+(D)$ have orientations induced from the orientation of $D$, and we assume $F(D)$ is oriented so that the induced orientation on $L_+(D) = \partial F(D) \cap \mathbb R^3_1$ matches the orientation of $L_+(D)$.

When $D$ is admissible,  we can obtain a surface-link from $F(D)$ by attaching trivial disks in $\mathbb R^3 \times [1, \infty)$ and trivial disks in $\mathbb R^3 \times (-\infty, 1]$.  We denote this surface-link by $\mathcal S(D)$, and call it
the {\it surface-link associated with $D$}.

We say that a surface-link $\mathcal L$ is {\it presented} by a marked graph diagram $D$ if  $\mathcal L$ is equivalent to
$\mathcal S(D)$.  Any surface-link can be presented by a marked graph diagram.

{\it Yoshikawa moves} for marked graph diagrams are local moves $\Omega_1, \ldots, \Omega_5$ (Type I) and $\Omega_6, \ldots, \Omega_8$ (Type II) illustrated in Fig.~\ref{fig-moves-type-I} and Fig.~\ref{fig-moves-type-II}.
It is known that two admissible marked graph diagrams present equivalent surface-links if and only if they are related by a finite sequence of Yoshikawa moves \cite{KK,KJL1,Sw}.

\begin{figure}[h]
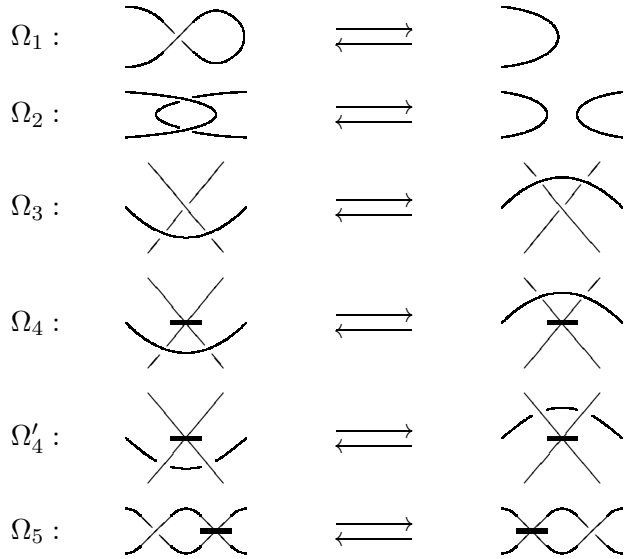

\begin{center}
\centerline{
\xy (12,2);(16,6) **@{-},
(12,6);(13.5,4.5) **@{-},
(14.5,3.5);(16,2) **@{-},
(16,6);(22,6) **\crv{(18,8)&(20,8)},
(16,2);(22,2) **\crv{(18,0)&(20,0)}, (22,6);(22,2) **\crv{(23.5,4)},
(7,8);(12,6) **\crv{(10,8)}, (7,0);(12,2) **\crv{(10,0)},
(35,5);(45,5) **@{-} ?>*\dir{>}, (35,3);(45,3) **@{-} ?<*\dir{<},
(57,8);(57,0) **\crv{(67,7)&(67,1)}, (-5,4)*{\Omega_1 :}, (73,4)*{},
\endxy }

\vskip.3cm

\centerline{ \xy (7,7);(7,1)  **\crv{(23,6)&(23,2)},
(16,6.3);(23,7)**\crv{(19,6.9)}, (16,1.7);(23,1) **\crv{(19,1.1)},
(14,5.7);(14,2.3) **\crv{(8,4)},
(35,5);(45,5) **@{-} ?>*\dir{>}, (35,3);(45,3) **@{-} ?<*\dir{<},
(57,7);(57,1) **\crv{(65,6)&(65,2)}, (73,7);(73,1)
**\crv{(65,6)&(65,2)},
(-5,4)*{\Omega_2 :},
\endxy}

\vskip.3cm

\centerline{
\xy (7,6);(23,6) **\crv{(15,-2)},
(10,0);(11.5,1.8) **@{-},
(17.5,3);(14.5,6.6) **@{-},
(14.5,6.6);(10,12) **@{-},
(20,12);(15.5,6.6) **@{-},
(14.5,5.5);(12.5,3) **@{-},
(18.5,1.8);(20,0) **@{-},
(35,7);(45,7) **@{-} ?>*\dir{>},
(35,5);(45,5) **@{-} ?<*\dir{<},
(57,6);(73,6) **\crv{(65,14)},
(70,12);(68.5,10.2) **@{-},
(67.5,9);(65.5,6.5) **@{-},
(64.6,5.5);(60,0) **@{-},
(62.5,9);(64.4,6.6) **@{-},
(64.4,6.6);(70,0) **@{-},
(61.5,10.2);(60,12) **@{-},
(-5,6)*{\Omega_3:},
\endxy}

\vskip.3cm

 \centerline{ \xy
 (7,6);(23,6)  **\crv{(15,-2)},
 (10,0);(11.5,1.8) **@{-},
(12.5,3);(20,12) **@{-},
(10,12);(17.5,3) **@{-},
(18.5,1.8);(20,0) **@{-},
(13,6);(17,6) **@{-}, (13,6.1);(17,6.1) **@{-}, (13,5.9);(17,5.9)
**@{-}, (13,6.2);(17,6.2) **@{-}, (13,5.8);(17,5.8) **@{-},
(35,7);(45,7) **@{-} ?>*\dir{>},
(35,5);(45,5) **@{-} ?<*\dir{<},
(57,6);(73,6)  **\crv{(65,14)},
(70,12);(68.5,10.2) **@{-},
(67.5,9);(60,0) **@{-},
(70,0);(62.5,9) **@{-},
(61.5,10.2);(60,12) **@{-},
(63,6);(67,6) **@{-}, (63,6.1);(67,6.1) **@{-}, (63,5.9);(67,5.9)
**@{-}, (63,6.2);(67,6.2) **@{-}, (63,5.8);(67,5.8) **@{-},
(-5,6)*{\Omega_4:},
\endxy}

\vskip.3cm

 \centerline{ \xy
  (13,2.2);(17,2.2)  **\crv{(15,1.7)},
  (7,6);(11,3)  **\crv{(10,3.5)},
  (23,6);(19,3)  **\crv{(20,3.5)},
 (10,0);(20,12) **@{-},
(10,12);(20,0) **@{-},
(13,6);(17,6) **@{-}, (13,6.1);(17,6.1) **@{-}, (13,5.9);(17,5.9)**@{-}, (13,6.2);(17,6.2) **@{-}, (13,5.8);(17,5.8) **@{-},
(35,7);(45,7) **@{-} ?>*\dir{>},
(35,5);(45,5) **@{-} ?<*\dir{<},
   (63,9.8);(67,9.8)  **\crv{(65,10.3)},
  (57,6);(61,9)  **\crv{(60,8.5)},
  (73,6);(69,9)  **\crv{(70,8.5)},
(70,12);(60,0) **@{-},
(70,0);(60,12) **@{-},
(63,6);(67,6) **@{-}, (63,6.1);(67,6.1) **@{-}, (63,5.9);(67,5.9)
**@{-},(63,6.2);(67,6.2) **@{-}, (63,5.8);(67,5.8) **@{-},
(-5,6)*{\Omega_4':},
\endxy}

\vskip.3cm

 \centerline{ \xy (9,2);(13,6) **@{-}, (9,6);(10.5,4.5) **@{-},
(11.5,3.5);(13,2) **@{-}, (17,2);(21,6) **@{-}, (17,6);(21,2)
**@{-}, (13,6);(17,6) **\crv{(15,8)}, (13,2);(17,2) **\crv{(15,0)},
(7,7);(9,6) **\crv{(8,7)}, (7,1);(9,2) **\crv{(8,1)}, (23,7);(21,6)
**\crv{(22,7)}, (23,1);(21,2) **\crv{(22,1)},
(17,4);(21,4) **@{-}, (17,4.1);(21,4.1) **@{-}, (17,3.9);(21,3.9)
**@{-}, (17,4.2);(21,4.2) **@{-}, (17,3.8);(21,3.8) **@{-},
(35,5);(45,5) **@{-} ?>*\dir{>}, (35,3);(45,3) **@{-} ?<*\dir{<},
(59,2);(63,6) **@{-}, (59,6);(63,2) **@{-}, (67,2);(71,6) **@{-},
(67,6);(68.5,4.5) **@{-}, (69.5,3.5);(71,2) **@{-}, (63,6);(67,6)
**\crv{(65,8)}, (63,2);(67,2) **\crv{(65,0)}, (57,7);(59,6)
**\crv{(58,7)}, (57,1);(59,2) **\crv{(58,1)}, (73,7);(71,6)
**\crv{(72,7)}, (73,1);(71,2) **\crv{(72,1)},
(63,4);(59,4) **@{-}, (63,4.1);(59,4.1) **@{-}, (63,3.9);(59,3.9)
**@{-}, (63,4.2);(59,4.2) **@{-}, (63,3.8);(59,3.8) **@{-},
 (-5,4)*{\Omega_5:},
\endxy }
\caption{ Moves of Type I}\label{fig-moves-type-I}
\end{center}
\end{figure}

\begin{figure}[h]
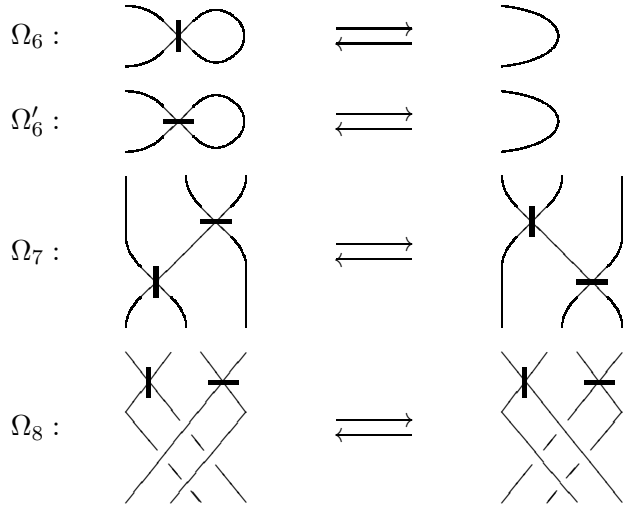

\begin{center}
\centerline{ \xy (12,6);(16,2) **@{-}, (12,2);(16,6) **@{-},
(16,6);(22,6) **\crv{(18,8)&(20,8)}, (16,2);(22,2)
**\crv{(18,0)&(20,0)}, (22,6);(22,2) **\crv{(23.5,4)}, (7,8);(12,6)
**\crv{(10,8)}, (7,0);(12,2) **\crv{(10,0)},
(35,5);(45,5) **@{-} ?>*\dir{>}, (35,3);(45,3) **@{-} ?<*\dir{<},
(57,8);(57,0) **\crv{(67,7)&(67,1)}, (-5,4)*{\Omega_6 :}, (73,4)*{},
(14,6);(14,2) **@{-}, (14.1,6);(14.1,2) **@{-}, (13.9,6);(13.9,2)
**@{-}, (14.2,6);(14.2,2) **@{-}, (13.8,6);(13.8,2) **@{-},
\endxy}

\vskip.3cm

\centerline{ \xy (12,6);(16,2) **@{-}, (12,2);(16,6) **@{-},
(16,6);(22,6) **\crv{(18,8)&(20,8)}, (16,2);(22,2)
**\crv{(18,0)&(20,0)}, (22,6);(22,2) **\crv{(23.5,4)}, (7,8);(12,6)
**\crv{(10,8)}, (7,0);(12,2) **\crv{(10,0)},
(35,5);(45,5) **@{-} ?>*\dir{>}, (35,3);(45,3) **@{-} ?<*\dir{<},
(57,8);(57,0) **\crv{(67,7)&(67,1)}, (-5,4)*{\Omega'_6 :},
(73,4)*{}, (12,4);(16,4) **@{-}, (12,4.1);(16,4.1) **@{-},
(12,4.2);(16,4.2) **@{-}, (12,3.9);(16,3.9) **@{-},
(12,3.8);(16,3.8) **@{-},
\endxy}

\vskip.3cm

\centerline{ \xy (9,4);(17,12) **@{-}, (9,8);(13,4) **@{-},
(17,12);(21,16) **@{-}, (17,16);(21,12) **@{-}, (7,0);(9,4)
**\crv{(7,2)}, (7,12);(9,8) **\crv{(7,10)}, (15,0);(13,4)
**\crv{(15,2)}, (17,16);(15,20) **\crv{(15,18)}, (21,16);(23,20)
**\crv{(23,18)}, (21,12);(23,8) **\crv{(23,10)}, (7,12);(7,20)
**@{-}, (23,8);(23,0) **@{-},
(11,4);(11,8) **@{-},
(10.9,4);(10.9,8) **@{-},
(11.1,4);(11.1,8) **@{-},
(10.8,4);(10.8,8) **@{-},
(11.2,4);(11.2,8) **@{-},
(17,14);(21,14) **@{-},
(17,14.1);(21,14.1) **@{-},
(17,13.9);(21,13.9) **@{-},
(17,14.2);(21,14.2) **@{-},
(17,13.8);(21,13.8) **@{-},
(35,11);(45,11) **@{-} ?>*\dir{>}, (35,9);(45,9) **@{-} ?<*\dir{<},
(71,4);(63,12) **@{-}, (71,8);(67,4) **@{-}, (63,12);(59,16) **@{-},
(63,16);(59,12) **@{-}, (73,0);(71,4) **\crv{(73,2)}, (73,12);(71,8)
**\crv{(73,10)}, (65,0);(67,4) **\crv{(65,2)}, (63,16);(65,20)
**\crv{(65,18)}, (59,16);(57,20) **\crv{(57,18)}, (59,12);(57,8)
**\crv{(57,10)}, (73,12);(73,20) **@{-}, (57,8);(57,0) **@{-},
(61,12);(61,16) **@{-},
(61.1,12);(61.1,16) **@{-},
(60.9,12);(60.9,16) **@{-},
(61.2,12);(61.2,16) **@{-},
(60.8,12);(60.8,16) **@{-},
(67,6);(71,6) **@{-},
(67,6.1);(71,6.1) **@{-},
(67,5.9);(71,5.9) **@{-},
(67,6.2);(71,6.2) **@{-},
(67,5.8);(71,5.8) **@{-},
(-5,10)*{\Omega_7:},
 \endxy}

\vskip.3cm

\centerline{ \xy (7,20);(14.2,11) **@{-}, (15.8,9);(17.4,7) **@{-},
(19,5);(23,0) **@{-}, (13,20);(7,12) **@{-}, (7,12);(11.2,7) **@{-},
(12.7,5.2);(14.4,3.2) **@{-}, (15.7,1.6);(17,0) **@{-},
(17,20);(23,12) **@{-}, (13,0);(23,12) **@{-}, (7,0);(23,20) **@{-},
(10,18);(10,14) **@{-}, (10.1,18);(10.1,14) **@{-},
(9.9,18);(9.9,14) **@{-}, (10.2,18);(10.2,14) **@{-},
(9.8,18);(9.8,14) **@{-}, (18,16);(22,16) **@{-},
(18,16.1);(22,16.1) **@{-}, (18,15.9);(22,15.9) **@{-},
(18,16.2);(22,16.2) **@{-}, (18,15.8);(22,15.8) **@{-},
(35,11);(45,11) **@{-} ?>*\dir{>}, (35,9);(45,9) **@{-} ?<*\dir{<},
(73,20);(65.8,11) **@{-}, (64.2,9);(62.6,7) **@{-}, (61,5);(57,0)
**@{-}, (67,20);(73,12) **@{-}, (73,12);(68.8,7) **@{-},
(67.3,5.2);(65.6,3.2) **@{-}, (64.3,1.6);(63,0) **@{-},
(63,20);(57,12) **@{-}, (67,0);(57,12) **@{-}, (73,0);(57,20)
**@{-},
(60,18);(60,14) **@{-}, (60.1,18);(60.1,14) **@{-},
(59.9,18);(59.9,14) **@{-}, (60.2,18);(60.2,14) **@{-},
(59.8,18);(59.8,14) **@{-}, (68,16);(72,16) **@{-},
(68,16.1);(72,16.1) **@{-}, (68,15.9);(72,15.9) **@{-},
(68,16.2);(72,16.2) **@{-}, (68,15.8);(72,15.8) **@{-},
(-5,10)*{\Omega_8:},
\endxy}
\caption{Moves of Type II}\label{fig-moves-type-II}
\end{center}
\end{figure}


\section{Lipson's state model for classical links}
\label{sect-lip-link}
We first explain Lipson's state models for classical link invariants from \cite{ASL}. Some notations in this section are different from \cite{ASL}.
Let $K$ be an unoriented link and $D$ a diagram of $K$.
Let $\widehat{D}$ be the diagram obtained from $D$ by removing all crossings of $D$ as illustrated in Fig.~\ref{fig-cr-rem}.

\begin{figure}[h]
\begin{center}
\resizebox{0.4\textwidth}{!}{%
  \includegraphics{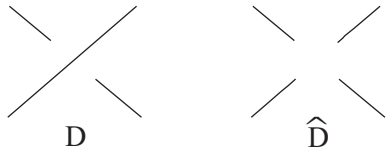} }
\caption{A diagram removed crossings}\label{fig-cr-rem}
\end{center}
\end{figure}

A {\it state} or a {\it labelling}
is a map $\sigma:\{$connected components of $\widehat{D} \} \rightarrow \{0,1\}$.
 A state (or a labelling) is {\it legal}
if at each crossing, each label occurs an even number of times.  See  Fig.~\ref{fig-legal}, where $a$ and $b$ are to be interpreted as distinct labels.
For a legal state $\sigma$ and a crossing $c$ of $D$, we define $\nu(c) = \nu(c,\sigma) \in \{x, y, z, w\}$
as shown in Fig.~\ref{fig-legal}.

\begin{figure}[h]
\begin{center}
\resizebox{0.6\textwidth}{!}{%
  \includegraphics{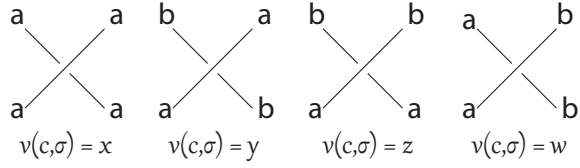} }
\caption{Legal states}\label{fig-legal}
\end{center}
\end{figure}

Define $[D]$ by $$[D] = [D](x,y,z,w) = \sum_{\text{legal states}~\sigma} \prod_{crossings ~c} \nu(c,\sigma)
\in \mathbb Z[x,y,z,w],$$
where $\sigma$ runs over all legal states of $D$ and $c$ runs over all crossings of $D$.

Lipson \cite{ASL} observed that to obtain invariance of $[D]$ under Reidemeister move $\Omega_2$ we are led to the relations:
$$x^2+zw=1, \quad x(z+w)=0, \quad y^2+zw=1, \quad y(z+w)=0,$$
and these relations are also sufficient to obtain invariance under Reidemeister move $\Omega_3$. As a consequence, we have the following.

\begin{prop}[Lipson \cite{ASL}]\label{prop1}
There are two cases in each of which the value $[D]$ is invariant under Reidemeister moves $\Omega_2$ and $\Omega_3$:
\begin{itemize}
  \item [(R1)] $x=y=0$ and $w=z^{-1}$,
  \item [(R2)] $z=-w$ and $x^2=y^2=1+z^2$.
\end{itemize}
\end{prop}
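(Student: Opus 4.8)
The plan is to treat the proposition purely as an algebra problem. By the observation of Lipson recalled just above, $[D]$ is invariant under $\Omega_2$ and $\Omega_3$ if and only if the four relations $x^2+zw=1$, $x(z+w)=0$, $y^2+zw=1$, $y(z+w)=0$ all hold. Since both the necessity and the sufficiency of these relations have been granted, proving the proposition reduces to determining the complete solution set of this system and checking that it is exactly the union of the families $(\mathrm{R1})$ and $(\mathrm{R2})$. I will therefore not revisit the invariance computations themselves, and instead solve the system directly.

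First I would subtract $y^2+zw=1$ from $x^2+zw=1$ to obtain $x^2=y^2\,(=1-zw)$, which already shows that the two labels play symmetric roles. The key structural feature is that the remaining two relations $x(z+w)=0$ and $y(z+w)=0$ factor through the single quantity $z+w$, so the natural case division is on whether $z+w$ vanishes.

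In the first case, $z+w=0$, the two product relations hold automatically, and substituting $w=-z$ into $x^2+zw=1$ gives $x^2=1+z^2$; together with $x^2=y^2$ this is precisely $(\mathrm{R2})$. In the second case, $z+w\ne 0$, the product relations force $x=0$ and $y=0$, and then $x^2+zw=1$ collapses to $zw=1$, which makes $z$ invertible and yields $w=z^{-1}$; this is precisely $(\mathrm{R1})$. Since the two cases are exhaustive, every solution lies in $(\mathrm{R1})\cup(\mathrm{R2})$, and conversely each of $(\mathrm{R1})$ and $(\mathrm{R2})$ is immediately seen to satisfy all four relations, completing the proof.

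I expect essentially no obstacle at the level of this proposition: the argument is an elementary case split, and the only points requiring a word of care are recording that $zw=1$ legitimizes writing $w=z^{-1}$, and invoking the assumed sufficiency so that no separate re-verification of $\Omega_2$ or $\Omega_3$ invariance is needed. The genuinely substantive ingredient — the one I am permitted to assume — is the derivation of the four relations from the $\Omega_2$ move, which amounts to enumerating the legal states on the two sides of the move and matching the resulting monomials in $x,y,z,w$; that combinatorial bookkeeping, rather than the algebra above, is where the real content resides.
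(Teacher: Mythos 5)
Your proposal is correct and follows essentially the same route as the paper: the paper presents the proposition as an immediate consequence of Lipson's relations $x^2+zw=1$, $x(z+w)=0$, $y^2+zw=1$, $y(z+w)=0$, and your case split on whether $z+w$ vanishes is exactly the algebra that turns those relations into the two families (R1) and (R2). The only implicit assumption worth noting is that the parameters live in an integral domain (e.g.\ $\mathbb{C}$), so that $x(z+w)=0$ with $z+w\neq 0$ forces $x=0$; this is the setting in which the state sums are evaluated, so no issue arises.
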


For invariance under Reidemeister move $\Omega_1$, we need to normalize $[D]$.

First we consider a case where $K$ is oriented.
We denote by $w(D)$ the writhe of $D$, that is the number of positive crossings of $D$ minus that of negative ones.

\begin{thm}[Lipson \cite{ASL}]\label{lipson-thm}
When we define $R^{\rm ori}_D(z)$ and $S^{\rm ori}_D(z)$ by below, they are invariants of oriented links. Moreover, $R^{\rm ori}_D$ is equal to $S^{\rm ori}_D$.
\begin{itemize}
  \item [(L1)] $R^{\rm ori}_D(z)=z^{w(D)} [D](0,0,z,z^{-1}) \in \mathbb Z[z, z^{-1}]$,
  \item [(L2)] $S^{\rm ori}_D(z)=z^{w(D)} [D](\frac{z+z^{-1}}{2},\frac{z+z^{-1}}{2},\frac{z-z^{-1}}{2},-\frac{z-z^{-1}}{2})
   \in \frac{1}{2}\mathbb Z[z, z^{-1}]$.
\end{itemize}
\end{thm}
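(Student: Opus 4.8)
The plan is to dispose of the two oriented Reidemeister moves $\Omega_2,\Omega_3$ by reduction to Proposition~\ref{prop1}, and then to handle $\Omega_1$ and the equality $R^{\rm ori}_D=S^{\rm ori}_D$ as the two substantive points. I would first record the short algebra check that each substitution falls under Proposition~\ref{prop1}: the arguments $(0,0,z,z^{-1})$ satisfy $x=y=0$ and $w=z^{-1}$, which is case (R1); and the arguments $(\tfrac{z+z^{-1}}{2},\tfrac{z+z^{-1}}{2},\tfrac{z-z^{-1}}{2},-\tfrac{z-z^{-1}}{2})$ make the third and fourth entries negatives of one another and satisfy $(\tfrac{z+z^{-1}}{2})^2=1+(\tfrac{z-z^{-1}}{2})^2$, which is case (R2). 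Hence by Proposition~\ref{prop1} the value $[D]$ at either substitution is invariant under $\Omega_2$ and $\Omega_3$. Since $\Omega_2$ inserts or deletes a canceling pair of crossings and $\Omega_3$ leaves all crossing signs unchanged, the writhe $w(D)$ is preserved, so the normalizing factor $z^{w(D)}$ is unaffected and $R^{\rm ori}_D$ and $S^{\rm ori}_D$ are already invariant under $\Omega_2,\Omega_3$.

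The crux is $\Omega_1$, which I would treat by a local analysis of a single curl. Removing the curl's crossing splits off a small loop arc whose label $\ell\in\{0,1\}$ is independent of the label $m$ of the through-strand, and every choice of $(\ell,m)$ is legal at this crossing; summing over $\ell$ while freezing the state on the rest of the diagram therefore multiplies $[D]$ by a global scalar equal to the value when $\ell=m$ plus the value when $\ell\neq m$, independent of $m$. Reading the four entries of Fig.~\ref{fig-legal} off at the curl gives, for a positive curl, $\lambda_+=x+w$, and for a negative curl, $\lambda_-=y+z$. Substituting the (R1) values yields $\lambda_+=0+z^{-1}=z^{-1}$ and $\lambda_-=0+z=z$; substituting the (R2) values yields $\lambda_+=\tfrac{z+z^{-1}}{2}-\tfrac{z-z^{-1}}{2}=z^{-1}$ and $\lambda_-=\tfrac{z+z^{-1}}{2}+\tfrac{z-z^{-1}}{2}=z$. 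In every case $\lambda_\pm=z^{\mp1}$, which exactly cancels the change $w(D)\mapsto w(D)\pm1$ in the factor $z^{w(D)}$. Thus both normalized sums are unchanged by $\Omega_1$, and each is an oriented link invariant.

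For the identity $R^{\rm ori}_D=S^{\rm ori}_D$ I would show that both coincide with one and the same specialization of the Kauffman polynomial $F(a,z)$. The strategy is a single-crossing analysis establishing that $[D]$ at either substitution obeys the Kauffman skein relation among $L_+,L_-,L_0,L_\infty$: grouping the legal states at the distinguished crossing by the labels on its two strands, and using the defining relations $x^2+zw=1$ and $x(z+w)=0$ (respectively their (R1) and (R2) instances), collapses the four local state sums into the required linear relation, with the curl factor $\lambda_\pm=z^{\mp1}$ from the previous paragraph playing the role of Kauffman's variable $a^{\pm1}$. Together with the normalization $[\,\text{unknot}\,]=1$, these properties pin down the corresponding specialization of $F$ uniquely, so both $R^{\rm ori}_D$ and $S^{\rm ori}_D$ equal it, and hence equal each other.

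The step I expect to be the main obstacle is this last one. Extracting the Kauffman skein relation cleanly from the state sum requires careful bookkeeping of the $2$--$2$ versus $4$--$0$ label distributions at the distinguished crossing across all four tangle replacements, and the genuinely delicate combinatorial point is that two visibly different substitutions must be shown to land on the very same specialization of $F$. By comparison, the $\Omega_2,\Omega_3$ reduction and the $\Omega_1$ curl computation are essentially forced once Proposition~\ref{prop1} and the value $\lambda_\pm=z^{\mp1}$ are in hand.
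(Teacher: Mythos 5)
First, a point of comparison that matters: the paper contains no proof of this statement at all. It is quoted as Lipson's theorem from \cite{ASL}, and the only textual trace of its proof is the remark that follows it, recording Lipson's identification $R^{\rm ori}_D(z)=(-1)^{N-1}F(iz^{-1},iz-iz^{-1})$ with the Kauffman polynomial. So your proposal can only be measured against Lipson's route, which in outline you follow. Your first two paragraphs are correct: the reduction of $\Omega_2,\Omega_3$ to Proposition~\ref{prop1} is exactly right, and your positive-curl factor $\lambda_+=x+w$ is precisely the computation the paper itself performs later, in the proof of Theorem~\ref{thm3}, where $[D']=(x+w)[D]$ for a positive kink. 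Two harmless slips: the all-equal labelling is weighted $x$ at \emph{every} crossing (this is forced by the shape $x^2+zw=1$, $y^2+zw=1$ of the $\Omega_2$ relations), so the negative-curl factor is $x+z$ rather than $y+z$ --- immaterial here only because $x=y$ under both substitutions; and $[\mathrm{unknot}]=2$, not $1$, since the single circle admits two legal labellings.

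The genuine flaw is in the equality step, the one you flag as the main obstacle. The state sum does \emph{not} satisfy the Kauffman (sum-form) skein relation in these variables, and the relations $x^2+zw=1$, $x(z+w)=0$ do not produce it: grouping states at the distinguished crossing by boundary label pattern, the pattern in which the two strands carry distinct constant labels contributes $2y$ to $[D_+]+[D_-]$ against $0$ on the smoothed side (nonzero under (L2)), and the all-equal pattern forces $x=z+w$ (false under (L1)); so the sum-form relation fails for both substitutions. What does hold --- identically in $x,y,z,w$, with no relations needed --- is the Dubrovnik-type difference relation $[D_+]-[D_-]=(z-w)([D_0]-[D_\infty])$ (up to the sign convention for $z,w$ in Fig.~\ref{fig-legal}), verified pattern-by-pattern exactly in the style you propose: the two good patterns cancel, the two bad patterns give $\pm(z-w)$. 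Both substitutions yield the same parameter $z-w=z-z^{-1}$, the same curl factors $z^{\mp1}$, the same unlink values $2^n$, and regular-isotopy invariance, so the standard induction on crossing number pins both normalized sums down to one and the same invariant; the further passage to $F$ itself requires Lickorish's change of variables, which is exactly where the factor $(-1)^{N-1}$ and the imaginary substitutions in the paper's remark come from. With the difference-form relation substituted for the sum-form one, your outline goes through; as written, the key step would not.
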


\begin{rem}
Our $R^{\rm ori}_D(z)$ and $S^{\rm ori}_D(z)$  are denoted by $R_D(z)$ and $S_D(z)$ in \cite{ASL}.
Let $F_D(a,z)$ be the Kauffman polynomial and let $N$ be the number of the components of the link presented by $D$. Lipson \cite{ASL} proved that $R^{\rm ori}_D(z)=(-1)^{N-1}F({iz^{-1},iz-iz^{-1}}).$ Thus $R^{\rm ori}_D$ and $S^{\rm ori}_D$ are two distinct state models for the same link invariant derived from $F_D(a,z)$.
\end{rem}

Next we consider a case where $K$ is unoriented.
We denote by $sw(D)$ the self-writhe of $D$.  (Let $K= K_1 \cup \cdots \cup K_N$, where $K_i$ $(i=1, \dots, N)$ is the component of $K$, and let $D= D_1 \cup \cdots \cup D_N$, where $D_i$ $(i=1, \dots, N)$ is the sub-diagram corresponding to $K_i$.  Note that the writhe $w(D_i)$ is well defined for any orientation of $K_i$.  The self-writhe is the sum of $w(D_i)$ for all $i$.)

\begin{thm}\label{lipson2-thm}
When we define $R^{\rm unori}_D(z)$ and $S^{\rm unori}_D(z)$ by below, they are invariants of unoriented links. Moreover, $R^{\rm unori}_D$ is equal to $S^{\rm unori}_D$.
\begin{itemize}
  \item [(L1)] $R^{\rm unori}_D(z)=z^{sw(D)} [D](0,0,z,z^{-1})$,
  \item [(L2)] $S^{\rm unori}_D(z)=z^{sw(D)} [D](\frac{z+z^{-1}}{2},\frac{z+z^{-1}}{2},\frac{z-z^{-1}}{2},-\frac{z-z^{-1}}{2})$.
\end{itemize}
\end{thm}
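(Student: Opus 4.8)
The plan is to deduce Theorem~\ref{lipson2-thm} from its oriented counterpart, Theorem~\ref{lipson-thm}, by exploiting two features of the construction. First, the unnormalized state-sum $[D]$ is defined purely in terms of the crossings and the $\{0,1\}$-labellings of $\widehat D$, so it never refers to an orientation and is automatically an orientation-independent function of the diagram. Second, as recalled just before the theorem, the self-writhe $sw(D)$ is independent of the chosen orientation of each component, since the sign of a self-crossing is unchanged when the orientation of its component is reversed. Consequently $z^{sw(D)}[D]$ is a well-defined function of an \emph{unoriented} diagram for the two parameter specializations appearing in (L1) and (L2), and it only remains to check invariance under the Reidemeister moves $\Omega_1$, $\Omega_2$ and $\Omega_3$.

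For $\Omega_2$ and $\Omega_3$ I would argue that nothing new is required. By Proposition~\ref{prop1}, in each of the cases (R1) and (R2)---which are exactly the specializations used in (L1) and (L2)---the state-sum $[D]$ is already invariant under $\Omega_2$ and $\Omega_3$. Moreover neither move changes $sw(D)$: an $\Omega_3$ move alters neither the set of crossings nor their signs, while an $\Omega_2$ move creates or destroys a pair of crossings formed by the same two arcs, hence of opposite sign and of the same type (both self-crossings or both mixed crossings), so their net contribution to $sw(D)$ is zero. Therefore $z^{sw(D)}[D]$ inherits the $\Omega_2$- and $\Omega_3$-invariance of $[D]$.

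The one point that genuinely requires care is $\Omega_1$, and this is where the replacement of $w(D)$ by $sw(D)$ must be justified; I expect it to be the main (indeed essentially the only) obstacle. The observation is that a Reidemeister~I kink is a \emph{self}-crossing, so adding a kink of sign $\varepsilon=\pm1$ changes both $w(D)$ and $sw(D)$ by the same amount $\varepsilon$, for any orientation. From the already established oriented $\Omega_1$-invariance of $z^{w(D)}[D]$ (Theorem~\ref{lipson-thm}) we read off the transformation law $[D']=z^{-\varepsilon}[D]$ for the raw state-sum, and then $z^{sw(D')}[D']=z^{sw(D)+\varepsilon}z^{-\varepsilon}[D]=z^{sw(D)}[D]$, which is precisely the desired $\Omega_1$-invariance. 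The delicate step is thus verifying that a kink contributes to the self-writhe exactly as it does to the writhe, so that the normalizing exponent still compensates; everything else is bookkeeping.

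Finally, for the equality $R^{\rm unori}_D=S^{\rm unori}_D$ I would factor through the oriented invariants. Choosing any orientation of $D$, the definitions give $R^{\rm unori}_D=z^{sw(D)-w(D)}R^{\rm ori}_D$ and $S^{\rm unori}_D=z^{sw(D)-w(D)}S^{\rm ori}_D$, since the two quantities differ from their oriented versions only in the normalizing exponent. As Theorem~\ref{lipson-thm} asserts $R^{\rm ori}_D=S^{\rm ori}_D$, multiplying this identity by the common factor $z^{sw(D)-w(D)}$ yields $R^{\rm unori}_D=S^{\rm unori}_D$; the left-hand sides do not depend on the auxiliary orientation, so the conclusion is orientation-free.
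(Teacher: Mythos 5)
Your proof is correct and follows essentially the same route as the paper's: normalize by $z^{sw(D)}$ to absorb the $\Omega_1$ kink factor, rely on Proposition~\ref{prop1} for $\Omega_2$ and $\Omega_3$, and deduce $R^{\rm unori}_D=S^{\rm unori}_D$ from the oriented equality $R^{\rm ori}_D=S^{\rm ori}_D$ via the relation $w(D)-sw(D)=2\,{\rm lk}(D)$. The only cosmetic difference is that you read off the kink transformation law for $[D]$ from Theorem~\ref{lipson-thm} rather than computing the effect of $\Omega_1$ on the state-sum directly, which is equally valid.
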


\begin{proof}
Considering the difference of $[D](0,0,z,z^{-1})$ by a Reidemeister move $\Omega_1$, we see that $z^{sw(D)}$ works as normalization.  Thus $R^{\rm unori}_D(z)$ is an invariant of unoriented links.
Give an orientation to $K$, then $w(D) = sw(D) + 2 {\rm lk}(D)$, where
${\rm lk}(D)$ is the total linking number $\sum_{i < j} {\rm lk}(D_i, D_j)$.
Then $R^{\rm ori}_D(z)=z^{2 {\rm lk}(D)} R^{\rm unori}_D(z)$ and
$S^{\rm ori}_D(z)=z^{2 {\rm lk}(D)} S^{\rm unori}_D(z)$.  Thus $R^{\rm unori}_D(z) = S^{\rm unori}_D(z)$.
\end{proof}

\section{Lipson's state model for marked graph diagrams}
\label{sect-mgd-ym}

In this section, we generalize Lipson's state models of classical link diagrams to marked graph diagrams, and observe when we obtain state-sum invariants of surface-links. Throughout this section, marked graph diagrams are unoriented.

Let $D$ be a marked graph diagram.
Let $\widehat{D}$ be the diagram obtained from $D$ by removing all crossings of $D$ as illustrated in Fig.~\ref{fig-cr-rem}. (We do not remove marked vertices of $D$.)
A {\it state} or a {\it labelling}
is a map $\sigma:\{$connected components of $\widehat{D} \} \rightarrow \{0,1\}$.
 A state (or a labelling) is {\it legal}
if at each crossing, each label occurs an even number of times. For a legal state $\sigma$ and a classical crossing $c$, we define $\nu(c) = \nu(c, \sigma) \in \{x, y, z, w\}$ as shown in Fig.~\ref{fig-legal}.

Define $[D]$ by $$[D] = [D](x,y,z,w) = \sum_{\text{legal states}~\sigma} \prod_{\text{crossings}~c} \nu(c,\sigma) \in \mathbb Z[x,y,z,w].$$

\bigskip
Now one observes invariance of $[D]$ under Yoshikawa moves.

\begin{lem}\label{lem1}
The state-sum $[D]$ is invariant under Yoshikawa moves $\Omega_5,$ $\Omega_6,$ $\Omega_6'$ and $\Omega_7$.
\end{lem}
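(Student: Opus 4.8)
The plan is to regard each of the four moves as a purely local change of the diagram and to reduce invariance of $[D]$ to a finite, local computation. Fix one of the moves and let $\Delta$ be the disk inside which the two diagrams $D_{\mathrm{left}}$ and $D_{\mathrm{right}}$ differ; outside $\Delta$ they coincide, and $\Delta$ meets the remainder of the diagram in a fixed finite set $B$ of boundary points. For a labelling $\beta\colon B\to\{0,1\}$ put
\[
Z_T(\beta)=\sum_{\sigma}\,\prod_{c}\nu(c,\sigma),
\]
where $T$ denotes the tangle inside $\Delta$, the index $\sigma$ runs over all labellings of the components of $\widehat T$ that restrict to $\beta$ and are legal at every crossing of $T$, and $c$ runs over the crossings of $T$; we set $Z_T(\beta)=0$ when $\beta$ is incompatible with the identifications forced inside $T$, and a closed component of $\widehat T$ carrying no crossing contributes a factor $2$. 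With $Z_E(\beta)$ defined the same way for the common exterior, every global legal state splits according to its boundary values, giving
\[
[D_{\mathrm{left}}]=\sum_{\beta}Z_{T_{\mathrm{left}}}(\beta)\,Z_E(\beta),\qquad
[D_{\mathrm{right}}]=\sum_{\beta}Z_{T_{\mathrm{right}}}(\beta)\,Z_E(\beta),
\]
so it suffices to prove $Z_{T_{\mathrm{left}}}(\beta)=Z_{T_{\mathrm{right}}}(\beta)$ for every $\beta$. Throughout I use the one \emph{structural fact} that distinguishes the marked case: in $\widehat D$ each marked vertex is kept as a genuine $4$-valent vertex, so its four incident edges lie in a single component and carry a common label, while it imposes no legality condition of its own. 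In short, a marked vertex transmits one label across itself without constraint.

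I first dispose of $\Omega_6$, $\Omega_6'$ and $\Omega_7$, none of which contains a classical crossing inside $\Delta$. For such a tangle the product over crossings is empty and every labelling respecting the component structure is legal, so $Z_T(\beta)$ equals $1$ if $\beta$ is constant on each component of $\widehat T$ and $0$ otherwise, there being no closed internal component on either side. Hence I only need to check that the two sides of each move induce the same partition of $B$. For $\Omega_6$ and $\Omega_6'$ the left tangle is a marked vertex two of whose edges are joined by a cap, so its two outer edges lie in one component, precisely as the plain arc on the right does; the marker direction, the only difference between the two moves, is irrelevant since it does not affect the $4$-valent connectivity of $\widehat D$. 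For $\Omega_7$ the two marked vertices are joined by an edge, so by the structural fact all six boundary points belong to a single component on the left, and the mirror configuration does the same on the right. In every case the boundary partitions agree, whence $Z_{T_{\mathrm{left}}}=Z_{T_{\mathrm{right}}}$.

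I then turn to $\Omega_5$, the only move carrying a classical crossing; each side has exactly one crossing and one marked vertex, which the move slides past each other. Denote by $p_1,p_2$ the two boundary points on the left of $\Delta$ and by $p_3,p_4$ the two on the right, with labels $\ell_1,\ell_2,\ell_3,\ell_4$. The computation runs as follows. On the left configuration the marked vertex joins the edges running to $p_3$ and $p_4$, so $Z_{T_{\mathrm{left}}}(\beta)=0$ unless $\ell_3=\ell_4=:q$; granting this, the crossing meets the four labels $\ell_1,\ell_2,q,q$, which is legal exactly when $\ell_1=\ell_2=:p$, and then contributes the value read from Fig.~\ref{fig-legal} for the crossing whose two strands carry $p$ and $q$. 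On the right configuration the marked vertex instead joins the edges to $p_1$ and $p_2$, forcing $\ell_1=\ell_2=:p$, after which legality at the crossing forces $\ell_3=\ell_4=:q$ and the crossing meets the same four labels with the same over/under data. Hence on both sides $Z_T(\beta)$ is the same crossing value times the same pair of identification-and-legality conditions, and the two local state-sums agree for every $\beta$. Notably, in contrast with $\Omega_2$ and $\Omega_3$ in Proposition~\ref{prop1}, no relation among $x,y,z,w$ is required here.

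The real work lies in the bookkeeping underlying the reduction and the two local computations. First I must justify the splitting $[D]=\sum_{\beta}Z_T(\beta)Z_E(\beta)$ with the correct treatment of components that straddle $\partial\Delta$ and of closed internal components, so that no state is miscounted when boundary points separated inside $T$ become identified through the exterior, or conversely. Secondly, for $\Omega_5$ the delicate point is to read Fig.~\ref{fig-legal} carefully enough to confirm that the crossing presents literally the same element of $\{x,y,z,w\}$ before and after the slide: one must track which strand is over and which label sits on each arc in the two-and-two case. Finally, for $\Omega_7$ the only pitfall is an error in tracing components through the two marked vertices, and here the structural fact that a marked vertex never separates its edges is exactly what keeps the six boundary points in one component; verifying this on both configurations is where attention is needed. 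I expect each of these checks to be routine once the relevant case analysis is written out explicitly.
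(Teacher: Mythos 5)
Your proof is correct and follows essentially the same route as the paper's: a purely local verification inside the disk where each move occurs, checking that connectivity through marked vertices settles $\Omega_6$, $\Omega_6'$, $\Omega_7$ (no crossings, one component on each side) and that the two legal labelling types at the $\Omega_5$ crossing yield identical crossing values before and after the slide. Your explicit factorization $[D]=\sum_{\beta}Z_T(\beta)Z_E(\beta)$ merely formalizes what the paper leaves implicit in its figures, so no substantive difference or gap remains.
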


\begin{proof}
Let $D$ and $D'$ be related by a Yoshikawa move $\Omega_5$ and let $B$ be a $2$-disk in $\mathbb R^2$ where the Yoshikawa move is applied.
There are two possible cases of legal labellings for $D \cap B$ as shown in Fig.~\ref{smy5-1}.
It is easily verified that Yoshikawa move $\Omega_5$ has no effect upon $[D]$ (see Fig.~\ref{smy5-1}).

\begin{figure}[h]
\begin{center}
\resizebox{0.6\textwidth}{!}{%
  \includegraphics{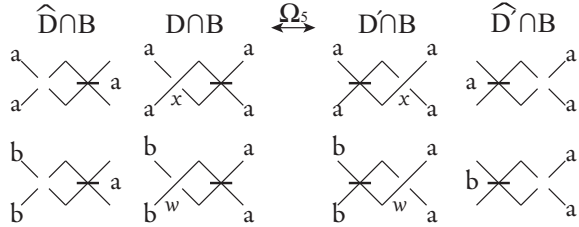} }
\caption{Yoshikawa move $\Omega_5$}\label{smy5-1}
\end{center}
\end{figure}

Let $B$ be a 2-disk where a Yoshikawa move $\Omega_6, \Omega_6'$ or $\Omega_7$ is applied on $D$, and let $D'$ be the result. Since there are no crossings in $D \cap B$ and $D' \cap B$, and since $D \cap B$ and $D' \cap B$ are connected, it is obvious that $[D]=[D']$.
\end{proof}

\begin{lem}\label{lem2}
The following $p_1, \ldots, p_4$ are relations  for the polynomial $[D]$ to be  invariant under Yoshikawa moves $\Omega_4$ and $\Omega_4'$:
$$
p_1 \/ :   xz=0,    \quad    p_2  \/ :   xw=0,  \quad
p_3 \/ :   yz=0,    \quad    p_4 \/ :   yw=0.
$$
\end{lem}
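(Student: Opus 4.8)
The plan is to localize the move exactly as in the proof of Lemma~\ref{lem1}. I fix a $2$-disk $B\subset\mathbb R^2$ on which $\Omega_4$ is performed, write $D,D'$ for the two diagrams (which agree outside $B$), and observe from Fig.~\ref{fig-moves-type-I} that each of $D\cap B$ and $D'\cap B$ consists of a single marked vertex together with an arc that runs over two of its four edges, so that each side has exactly two classical crossings. The only difference between the two sides is which pair of edges is crossed: on one side the arc crosses the two edges lying on one side of the marker, on the other side it crosses the opposite pair, the marked vertex itself being unchanged. Since $[D]$ splits as a sum over the labels on the arcs meeting $\partial B$, with the part outside $B$ contributing a common factor, it suffices to compare, for each fixed boundary labelling, the inner sum $\sum_{\beta}\nu(c_1,\sigma)\nu(c_2,\sigma)$ on the two sides, where $\beta$ is the single undetermined internal label.

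The structural heart of the argument is the behaviour of the marked vertex in $\widehat D$. Because marked vertices are not removed, the two edges of the vertex that the arc does \emph{not} cross stay joined to the vertex in one connected component of $\widehat D$, and are therefore forced to carry the common vertex label. Consequently, on one side of $\Omega_4$ the two boundary points at the outer ends of these uncrossed edges are constrained to carry equal labels, whereas on the other side it is the \emph{opposite} pair of boundary points that is so constrained. I would record this asymmetry explicitly by naming the six boundary labels: one side is nonzero only when the ``upper'' pair agrees, and the other only when the ``lower'' pair agrees.

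Next I would enumerate the legal states. Using that legality at a $4$-valent crossing is equivalent to its four incident labels summing to an even number, the two crossing conditions on each side either pin down $\beta$ uniquely or are inconsistent (contributing $0$); each inner sum thus collapses to a single monomial $\nu(c_1)\nu(c_2)$ read off from Fig.~\ref{fig-legal}. When the two boundary constraints are compatible (both pairs agree), a short case check shows the two monomials coincide, being either a common square such as $x^2$ or $y^2$ or else $zw$, so the two sides already agree with no condition imposed. The relations come precisely from the \emph{incompatible} boundary labellings, where the pair forced to agree on one side does agree while the opposite pair does not: that side must then vanish, the other being automatically $0$. Evaluating the surviving monomial via Fig.~\ref{fig-legal} in the four essentially different such configurations yields exactly $xz=0$, $xw=0$, $yz=0$ and $yw=0$, that is $p_1,\dots,p_4$; conversely, imposing these four relations annihilates every incompatible case and leaves the compatible cases already balanced, so $[D]=[D']$.

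Finally, $\Omega_4'$ is the mirror of $\Omega_4$, with the arc passing \emph{under} the two edges rather than over, and the same localization and enumeration apply verbatim, the crossing values now being read from the mirror entries of Fig.~\ref{fig-legal}; one obtains the identical set $p_1,\dots,p_4$. The main obstacle I anticipate is bookkeeping rather than conceptual: reading $\nu(c,\sigma)\in\{x,y,z,w\}$ correctly off Fig.~\ref{fig-legal} for each crossing, keeping straight which strand is the over-strand and which local label is the vertex germ versus the outer arc, so that the four incompatible configurations really do produce the four distinct products $xz,xw,yz,yw$ rather than a repeated one.
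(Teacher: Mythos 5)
Your proposal is correct and takes essentially the same approach as the paper's own proof: localize the move in a $2$-disk, enumerate legal labellings, note that labellings legal on both sides contribute equal monomials ($x^2$, $y^2$, or $zw$), and derive the relations from the labellings legal on only one side---those where the pair of boundary points joined through the marked vertex agrees on one side but not the other---which produce exactly the products $xz$, $xw$, $yz$, $yw$. The paper performs this enumeration by displaying the cases in Figures~\ref{smy4-1} and~\ref{smy4-4}, while you replace the figures with an explicit connectivity and parity argument; the content is the same.
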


\begin{proof}
We need to check that the polynomial $[D]$ is equal to the polynomial $[D']$, where $D'$ is the diagram obtained from $D$ by applying a single Yoshikawa move $\Omega_4$ in a $2$-disk $B$ as shown in Fig.~\ref{smy4}:\\
\begin{figure}[h]
\begin{center}
\resizebox{0.6\textwidth}{!}{%
  \includegraphics{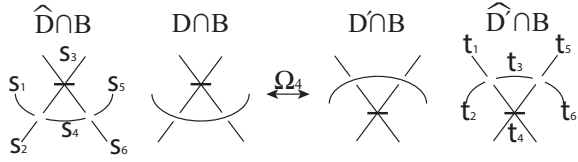} }
\caption{Yoshikawa move $\Omega_4$}\label{smy4}
\end{center}
\end{figure}

There are four possible cases of labellings that are legal for both  $D \cap B$ and $D' \cap B$ as shown
 in Fig.~\ref{smy4-1}. For these cases, it is obvious that $[D]=[D']$.
 In Fig.~\ref{smy4-4}, we show possible cases of legal labellings for $D\cap B$ (or $D'\cap B$) that have no counterparts for $D'\cap B$ (or $D\cap B$, resp.). In these cases, we obtain the relations $p_1, \dots, p_4$ for $[D]=[D']$.
 Thus we obtain the four relations $p_1, \ldots, p_4$ for the polynomial $[D]$ to be invariant under Yoshikawa move $\Omega_4$.
Similarly, we obtain the same four relations $p_1, \ldots, p_4$ for the polynomial $[D]$ to be invariant under Yoshikawa move $\Omega_4'$.
 \begin{figure}[h]
\begin{center}
\resizebox{0.6\textwidth}{!}{%
  \includegraphics{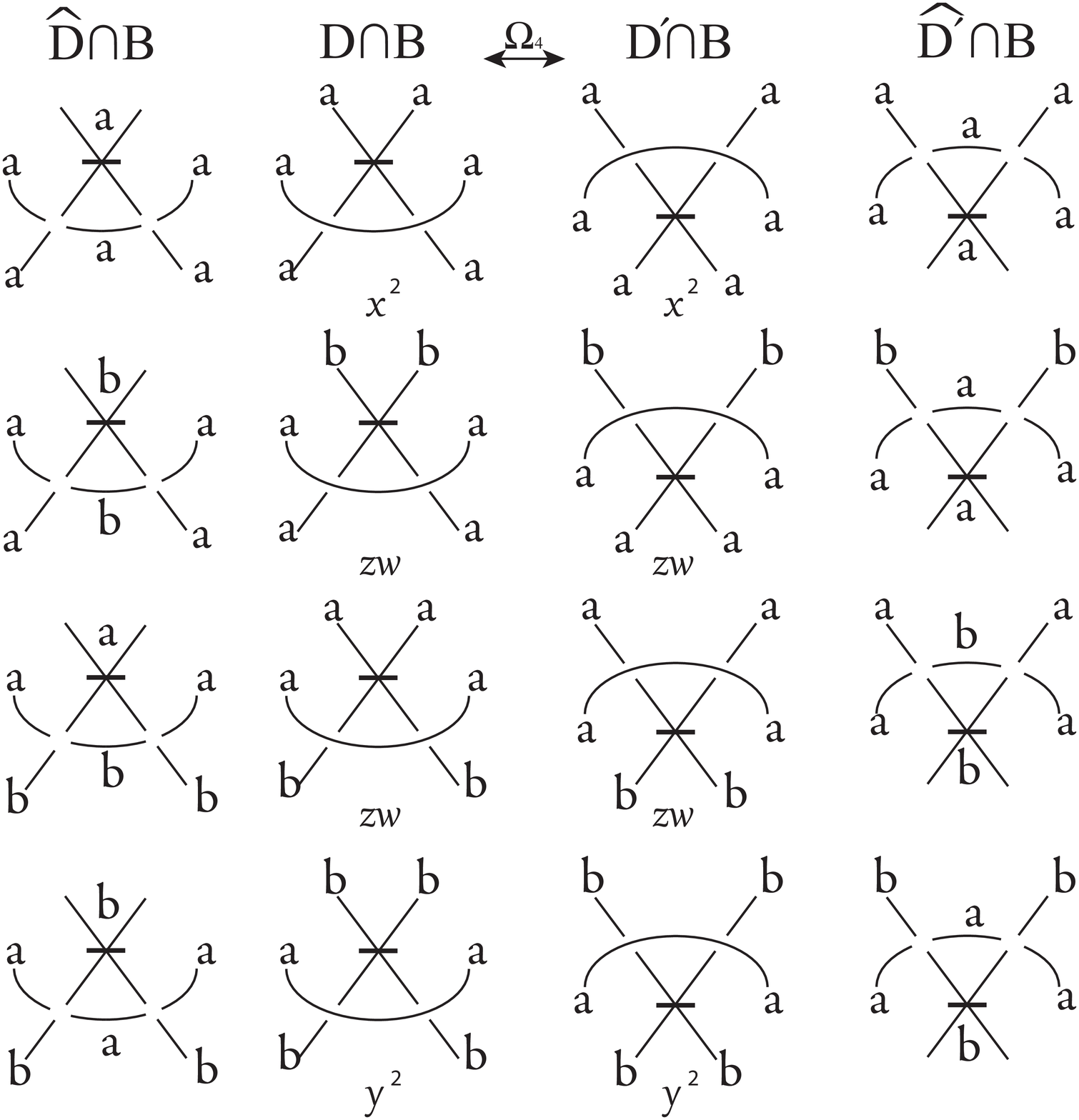} }
\caption{Yoshikawa move $\Omega_4$ (1)}\label{smy4-1}
\end{center}
\end{figure}

\begin{figure}[h]
\begin{center}
\resizebox{0.8\textwidth}{!}{%
  \includegraphics{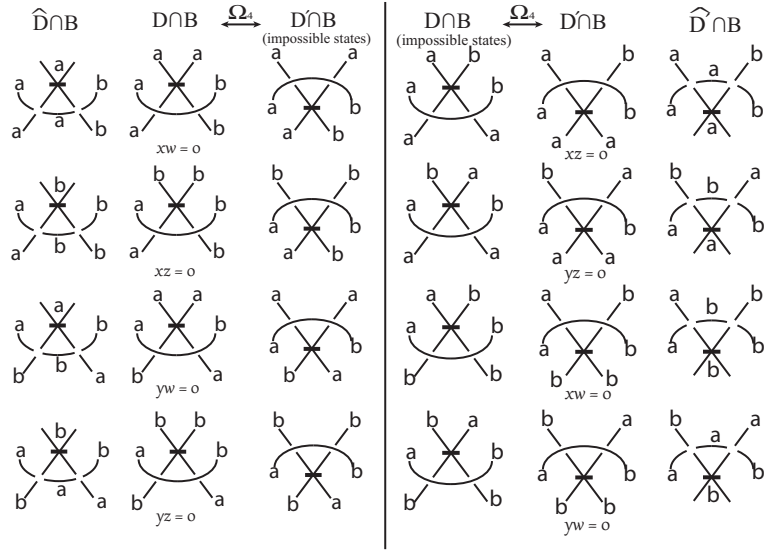} }
\caption{Yoshikawa move $\Omega_4$ (2)}\label{smy4-4}
\end{center}
\end{figure}

\end{proof}

\begin{lem}\label{lem2}
The following $q_1, \ldots, q_7$ are relations  for the polynomial $[D]$ to be invariant under Yoshikawa move $\Omega_8$:
\begin{align*}
      &q_1 : (x^2+y^2)z^2=(x^2+y^2)w^2, \\
      &q_2 : (x^2+w^2)yz=(x^2+z^2)yw, \\
      &q_3 : (x^2+w^2)xz=(x^2+z^2)xw,  \\
      &q_4 : (y^2+z^2)xz=(y^2+w^2)xw, \\
      &q_5 : (x^2+w^2)yw=(x^2+z^2)yz, \\
      &q_6 : (y^2+z^2)xw=(y^2+w^2)xz,  \\
      &q_7 : (y^2+z^2)yw=(y^2+w^2)yz.
\end{align*}
\end{lem}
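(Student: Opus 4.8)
The plan is to imitate the local state-counting argument used for the $\Omega_4$ case. Let $B$ be a $2$-disk in which the move $\Omega_8$ is performed, and let $D,D'$ be two marked graph diagrams that agree outside $B$ and carry, inside $B$, the two tangles of $\Omega_8$ shown in Fig.~\ref{fig-moves-type-II}. Each of $D\cap B$ and $D'\cap B$ contains four classical crossings and two marked vertices, and the diagram meets $\partial B$ in the same endpoints on both sides. Since $[\,\cdot\,]$ is a sum of products with one factor per crossing, it factorizes over $B$: writing $\ell$ for a labelling of the arcs of $\widehat D$ that meet $\partial B$, we have $[D]=\sum_{\ell} C_\ell\,\bigl[D\cap B\bigr]_\ell$, where $C_\ell$ collects the contribution of the part of $D$ lying outside $B$ and $\bigl[D\cap B\bigr]_\ell=\sum_{\sigma}\prod_{c}\nu(c,\sigma)$ is the local state-sum over all legal states $\sigma$ of $D\cap B$ restricting to $\ell$ on $\partial B$ (and likewise for $D'$). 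The factor $C_\ell$ is common to $D$ and $D'$ and can be prescribed arbitrarily by choosing the exterior, so $[D]=[D']$ holds for every ambient diagram if and only if $\bigl[D\cap B\bigr]_\ell=\bigl[D'\cap B\bigr]_\ell$ for each boundary labelling $\ell$ legal for both tangles. The whole problem thus reduces to comparing these local sums.

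The first concrete step is to tabulate, from Fig.~\ref{fig-legal}, the weight $\nu(c,\sigma)\in\{x,y,z,w\}$ produced at a crossing for each admissible labelling of its four incident arcs, recalling that legality forces those four labels either to coincide or to split two-and-two. Next I would enumerate the boundary labellings $\ell$ of the endpoints on $\partial B$. The connectivity inside the two tangles (through the two marked vertices and the resolved crossings) identifies several endpoints, and legality at each of the four crossings discards many assignments, so only a handful of genuinely distinct cases survive, grouped by the labels carried by the two strands moved across the marked vertices. For each surviving $\ell$ I would list the legal internal states on each side, form $\prod_c\nu(c,\sigma)$ for each, and sum. Because one interior arc of the $\Omega_8$ tangle can be labelled freely (subject to legality), the two internal sums collect into expressions carrying the factors $(x^2+y^2),(x^2+z^2),(x^2+w^2),(y^2+z^2),(y^2+w^2)$ appearing in the statement, multiplied by the weights of the two crossings whose type the free label does not affect. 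Equating the two local sums in each case then produces exactly one of the identities $q_1,\dots,q_7$; the remaining boundary labellings give either the trivial identity or a repeat of an equation already obtained, and are discarded.

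The main obstacle is organizational rather than conceptual: it is the bookkeeping of a fairly large case analysis, namely keeping a fixed convention for which arc at each crossing is the over-strand, reading off the correct element of $\{x,y,z,w\}$ from Fig.~\ref{fig-legal} consistently as one passes from the left tangle to the right tangle, and checking that the complete list of equations obtained over all boundary labellings collapses to precisely the seven relations $q_1,\dots,q_7$, with none omitted and none extraneous. I would manage this by treating the boundary labellings group by group and by exploiting the left--right symmetry of the $\Omega_8$ picture to roughly halve the work; at the end, the fact that no relation is redundant can be confirmed by pointing, for each $q_i$, to the boundary labelling that yields it as its comparison equation.
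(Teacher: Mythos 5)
Your proposal follows essentially the same route as the paper: the paper's proof also proceeds by enumerating the legal labellings of the two $\Omega_8$ tangles (its Figures~12 and~13 play the role of your case table) and equating the resulting local state-sums to read off $q_1,\dots,q_7$. Your locality/factorization framing and the observation that a free interior label produces the paired factors $(x^2+y^2),(x^2+z^2),(x^2+w^2),(y^2+z^2),(y^2+w^2)$ are just a more explicit write-up of that same case analysis, so the proposal is correct.
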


\begin{proof}
Possible legal labellings are as shown in Fig.~\ref{smy8-1} and \ref{smy8-5}.
\begin{figure}[ht]
\begin{center}
\resizebox{0.6\textwidth}{!}{%
  \includegraphics{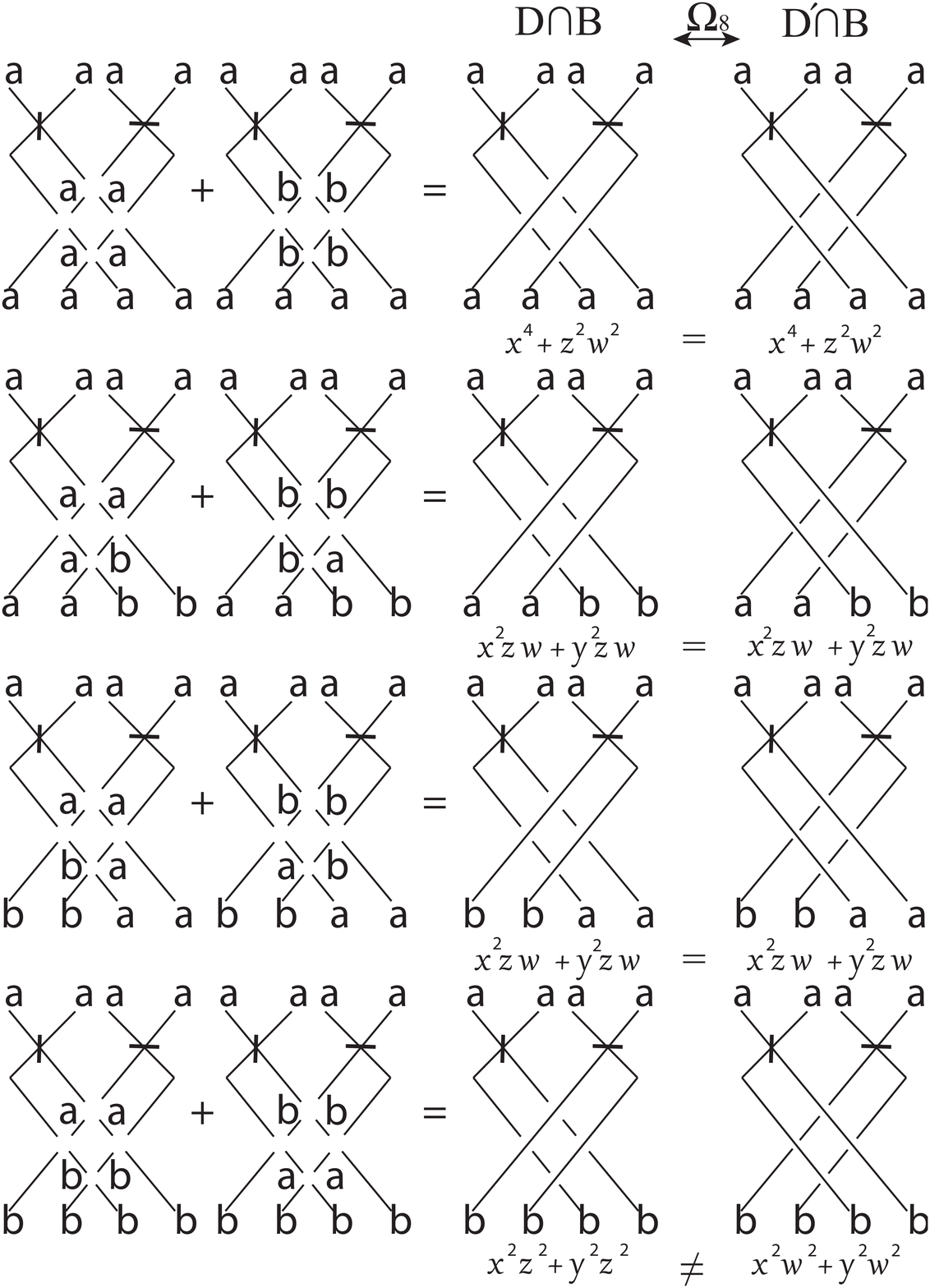} }
\caption{Yoshikawa move $\Omega_8$ (1)}\label{smy8-1}
\end{center}
\end{figure}

\begin{figure}[h]
\begin{center}
\resizebox{0.8\textwidth}{!}{%
  \includegraphics{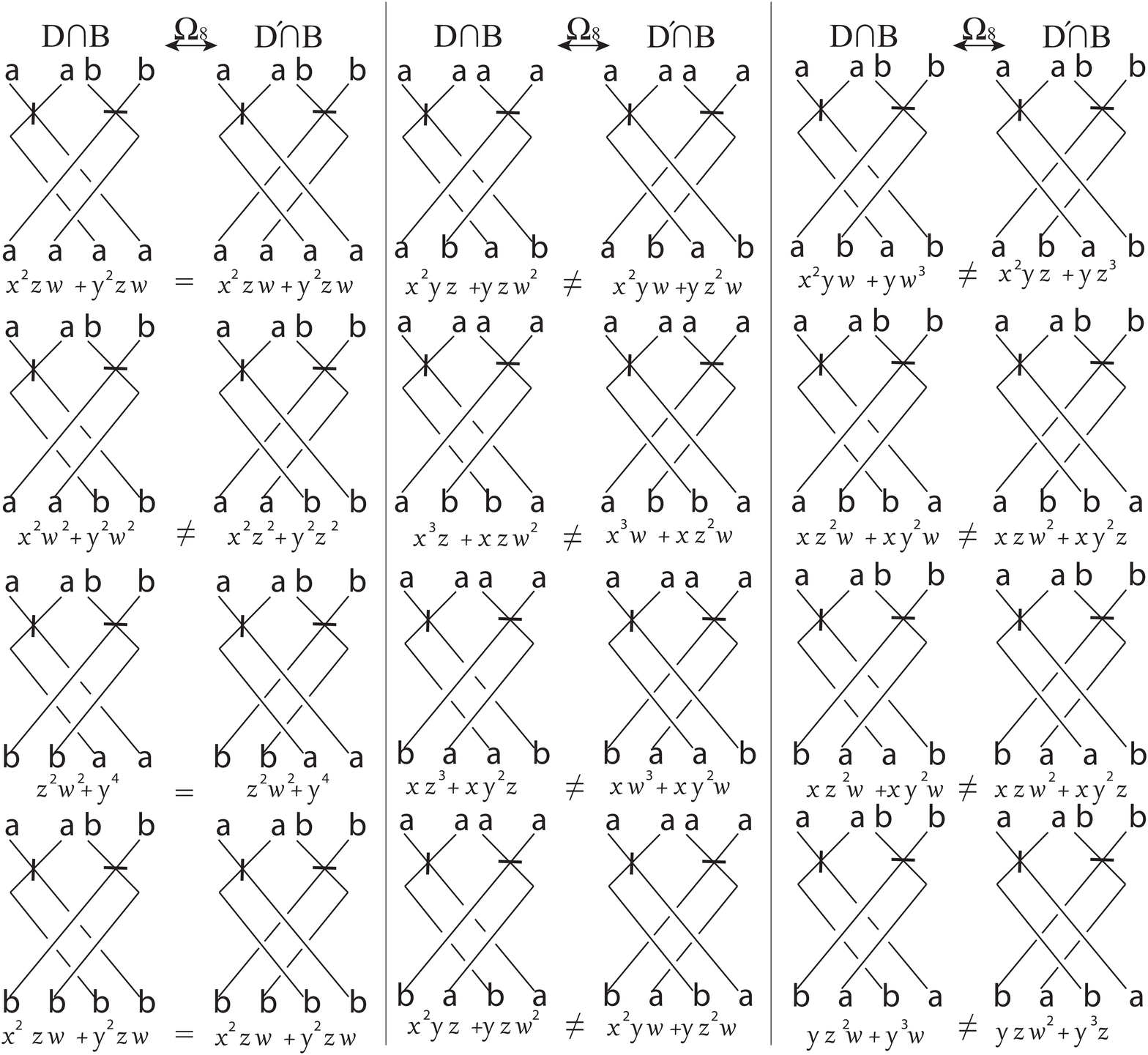} }
\caption{Yoshikawa move $\Omega_8$ (2)}\label{smy8-5}
\end{center}
\end{figure}

From these figures, to obtain invariance of the polynomial $[D]$ under Yoshikawa move $\Omega_8$ we are led to the seven relations $q_1, \ldots, q_7$.
\end{proof}

Let $D$ be a marked graph diagram, and let $D_+ = L_{+}(D)$ be the positive resolution diagram of $D$, i.e., the link diagram obtained from $D$ by smoothing at each marked vertex along the marker (see Fig.~\ref{spun-mgraph-res}).
We denote by $t_{+}(D)$ the self-writhe $sw(D_+)$ of $D_+$.

\begin{thm}\label{thm1}
For a marked graph diagram $D$,
define $R_{D}'(z)$ by
$$ R_{D}'(z) = z^{t_{+}(D)}[D](0,0,z,z^{-1}) \in \mathbb Z[z, z^{-1}].$$
Then $R_{D}'(z)$ is invariant under all Yoshikawa moves.
\end{thm}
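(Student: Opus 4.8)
The plan is to show that the substitution $(x,y,z,w)=(0,0,z,z^{-1})$ simultaneously kills every obstruction produced by the lemmas of this section together with Proposition~\ref{prop1}, so that $[D](0,0,z,z^{-1})$ is already invariant under all Yoshikawa moves except $\Omega_1$, and then to verify that the prefactor $z^{t_+(D)}$ is exactly the normalization needed to absorb the failure at $\Omega_1$. Note first that the claimed membership $R'_D(z)\in\mathbb Z[z,z^{-1}]$ is automatic, since $[D]\in\mathbb Z[x,y,z,w]$ and substituting $w=z^{-1}$ lands in $\mathbb Z[z,z^{-1}]$.

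First I would observe that $(0,0,z,z^{-1})$ is precisely case (R1) of Proposition~\ref{prop1}, so $[D](0,0,z,z^{-1})$ is invariant under $\Omega_2$ and $\Omega_3$. By Lemma~\ref{lem1} the state-sum $[D]$, hence its specialization, is invariant under $\Omega_5,\Omega_6,\Omega_6'$ and $\Omega_7$ for any values of the variables. It then remains to handle $\Omega_4,\Omega_4'$ and $\Omega_8$. Setting $x=y=0$ makes each of the relations $p_1:xz=0$, $p_2:xw=0$, $p_3:yz=0$, $p_4:yw=0$ hold trivially, so by the lemma providing $p_1,\dots,p_4$ the specialization is invariant under $\Omega_4$ and $\Omega_4'$; likewise each relation $q_1,\dots,q_7$ carries either the common factor $x^2+y^2$ or an explicit factor $x$ or $y$ on both sides, so all seven hold when $x=y=0$, and by the lemma providing $q_1,\dots,q_7$ the specialization is invariant under $\Omega_8$. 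Thus $[D](0,0,z,z^{-1})$ is invariant under every Yoshikawa move except possibly $\Omega_1$.

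Next I would control the normalizing factor $z^{t_+(D)}=z^{sw(D_+)}$ under these same moves. For $\Omega_2$ and $\Omega_3$ the positive resolution $D_+$ undergoes the corresponding classical Reidemeister move of type $\mathrm{II}$ or $\mathrm{III}$, neither of which changes the self-writhe. For each move involving marked vertices ($\Omega_4,\Omega_4',\Omega_5,\Omega_6,\Omega_6',\Omega_7,\Omega_8$) I would verify move by move, directly from the definition of the positive resolution, that smoothing the marked vertices on the two sides of the move yields resolution diagrams of equal self-writhe; for the $\Omega_6$-type moves no classical crossing is created at all, so this is immediate. Hence $t_+(D)$ is unchanged by every non-$\Omega_1$ move, and combined with the previous paragraph the product $z^{t_+(D)}[D](0,0,z,z^{-1})$ is invariant under $\Omega_2,\dots,\Omega_8$.

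Finally I would treat $\Omega_1$, which is the main obstacle. A kink is a classical crossing inserted in a region free of marked vertices, and marked vertices contribute no factor $\nu$ to the state-sum; therefore the local effect of the kink on $[D](0,0,z,z^{-1})$ is governed by exactly the computation used in the proof of Theorem~\ref{lipson2-thm}, so that a kink of sign $\epsilon=\pm1$ multiplies $[D](0,0,z,z^{-1})$ by $z^{-\epsilon}$. The same kink appears as a self-crossing of sign $\epsilon$ in the positive resolution $D_+$, so $t_+(D)=sw(D_+)$ increases by $\epsilon$ and the factor $z^{t_+(D)}$ is multiplied by $z^{\epsilon}$; the two contributions cancel, giving invariance of $R'_D(z)$ under $\Omega_1$. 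The delicate point to get right is that the state-sum genuinely factors through the local kink contribution in the presence of the marked vertices and of the global legal-state constraints, and I would justify this exactly as in the classical case, by summing over the label of the arc meeting the kink and checking that the marked part of the diagram contributes a common factor to the two sides.
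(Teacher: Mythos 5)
Your proposal is correct and takes essentially the same approach as the paper: specializing at $(x,y,z,w)=(0,0,z,z^{-1})$ satisfies case (R1) of Proposition~\ref{prop1} and makes all the relations $p_1,\dots,p_4$ and $q_1,\dots,q_7$ hold trivially, Lemma~\ref{lem1} disposes of $\Omega_5,\Omega_6,\Omega_6',\Omega_7$, and the factor $z^{t_+(D)}$ absorbs the $\Omega_1$ discrepancy. The only difference is one of explicitness: you spell out the kink cancellation ($z^{-\epsilon}$ against $z^{\epsilon}$) and the move-by-move invariance of $t_+(D)$, which the paper compresses into an appeal to ``the same argument as'' Lipson's theorem and the assertion that $t_+(D)$ is preserved.
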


For a surface-link $\mathcal L$, take a marked graph diagram presenting $\mathcal L$, say $D$.  Then $R_{D}'(z) $ is an invariant of $\mathcal L$.

We note that if $D$ is a marked graph diagram without marked vertices, i.e., $D$ is a link diagram, then $D_+=D$ and
$R_{D}'(z)=R^{\rm unori}_D(z)$.   Thus $R_{D}'$ is a generalization to a surface-link invariant of Lipson's link invariant $R^{\rm unori}_D(z)$.

\begin{proof}
By the same argument with the proof of Theorem~\ref{lipson-thm}, we see  that the polynomial $R'_D(z)$ is invariant under Yoshikawa moves  $\Omega_1, \Omega_2$ and $\Omega_3$. By Lemma~\ref{lem1}, the polynomial $[D](x,y,z,w)$ is invariant under Yoshikawa moves $\Omega_5$, $\Omega_6$, $\Omega_6'$ and $\Omega_7$.
Since $x=y=0,$  the relations $p_1, \ldots, p_4$ and $q_1, \ldots, q_7$ hold, and hence $[D](x,y,z,w)$ is invariant under Yoshikawa moves $\Omega_4, \Omega_4'$ and $\Omega_8$.  Since $t_{+}(D)$ is preserved under these Yoshikawa moves, we see the result.
\end{proof}

\begin{thm} \label{thm2}
Let $D$ be a marked graph diagram.
Define $$S_{D}'=[D](1,1,0,0).$$
Then $S_{D}'$ is invariant under all Yoshikawa moves.
\end{thm}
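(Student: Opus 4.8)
The plan is to follow the template of the proof of Theorem~\ref{thm1}, sorting the Yoshikawa moves into three groups that reduce to facts already established in Sections~\ref{sect-lip-link} and~\ref{sect-mgd-ym}, and isolating the single non-formal point, which is the move $\Omega_1$. First I would record that the specialization $(x,y,z,w)=(1,1,0,0)$ lies in case (R2) of Proposition~\ref{prop1}: indeed $z=-w$ holds trivially since $z=w=0$, and $x^2=y^2=1=1+0^2=1+z^2$. Hence $[D](1,1,0,0)$ is invariant under $\Omega_2$ and $\Omega_3$. For the moves coming from marked vertices I would observe that $z=w=0$ forces every relation $p_1,\dots,p_4$ and $q_1,\dots,q_7$ to hold: each $p_i$ is a monomial containing $z$ or $w$, and each side of each $q_i$ carries a factor $z$ or $w$, so both sides vanish at $(1,1,0,0)$. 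By the lemmas establishing those relations this yields invariance under $\Omega_4,\Omega_4'$ and $\Omega_8$, while Lemma~\ref{lem1} gives invariance under $\Omega_5,\Omega_6,\Omega_6',\Omega_7$ with no condition on the variables.

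The hard part will be the move $\Omega_1$, because $S_D'$ carries no normalizing factor analogous to the $z^{t_{+}(D)}$ used for $R_D'$; thus I must show that inserting a curl leaves $[D](1,1,0,0)$ literally unchanged. Since $\Omega_1$ takes place away from all marked vertices, this is a purely classical local computation, and I would analyze the single new classical crossing of the curl. After deleting that crossing, the small loop becomes one component of $\widehat{D}$ whose two ends meet the crossing, while the two remaining ends belong to the through-strand; legality forces these two through-ends to carry a common label $a$, and the loop may independently carry either label $s\in\{0,1\}$, with all combinations legal. Summing over $s$, the curl multiplies the state sum by a local factor equal to (the monochromatic weight when $s=a$) plus (the bichromatic weight when $s\neq a$), that is, one of $x+z$, $x+w$, $y+z$, $y+w$ according to the sign of the curl and the over/under pattern. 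At $(1,1,0,0)$ each of these equals $1+0=1$, so $[D](1,1,0,0)$ is unchanged by $\Omega_1$.

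Finally I would assemble the pieces: invariance under the classical Reidemeister moves $\Omega_1,\Omega_2,\Omega_3$, under $\Omega_4,\Omega_4',\Omega_8$ via the vanishing of the $p_i$ and $q_i$, and under $\Omega_5,\Omega_6,\Omega_6',\Omega_7$ via Lemma~\ref{lem1}, shows that $S_D'=[D](1,1,0,0)$ is invariant under all Yoshikawa moves. I expect the only genuine obstacle to be the bookkeeping of the curl factor in the middle paragraph: one must verify that for each sign and each over/under choice the resulting weight is indeed one of $x+z,x+w,y+z,y+w$, hence $1$ at our specialization. It is worth flagging the conceptual reason no normalization is needed, namely that $(1,1,0,0)$ is exactly the value at $z=1$ of the family defining $S^{\rm unori}_D(z)$, where Lipson's writhe factor $z^{sw(D)}$ degenerates to $1$; this is precisely why the curl factor collapses to $1$ here although it does not for general parameters satisfying (R2).
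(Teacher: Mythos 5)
Your proposal is correct and follows essentially the same route as the paper's proof: the identical split of the Yoshikawa moves, with $\Omega_5,\Omega_6,\Omega_6',\Omega_7$ handled by Lemma~\ref{lem1} and $\Omega_4,\Omega_4',\Omega_8$ by the vanishing of all the relations $p_1,\dots,p_4$ and $q_1,\dots,q_7$ at $z=w=0$. The only difference is presentational: where the paper disposes of $\Omega_1,\Omega_2,\Omega_3$ in one stroke by citing Theorem~\ref{lipson2-thm} at $z=1$ (where the normalization $z^{sw(D)}$ degenerates to $1$), you re-derive this from case (R2) of Proposition~\ref{prop1} plus an explicit curl computation; note that your curl factor is in fact always $x+z$ or $x+w$, never involving $y$ (the loop forces an adjacent pair of equal labels, which rules out the pattern weighted by $y$), but since all four of your candidate factors equal $1$ at $(1,1,0,0)$ this slight over-inclusiveness is harmless.
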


For a surface-link $\mathcal L$, take a marked graph diagram presenting $\mathcal L$, say $D$.  Then $S_{D}'$ is an invariant of $\mathcal L$.

We note that if $D$ is a marked graph diagram without marked vertices, i.e., $D$ is a link diagram, then $S_{D}'=S^{\rm unori}_{D}(1)$. Thus $S_{D}'$ is a generalization to a surface-link invariant of Lipson's  link invariant $S^{\rm unori}_D(z)$ evaluated with $z=1$.

\begin{proof}
By Theorem~\ref{lipson2-thm}, we see that $S_{D}'=S^{\rm unori}_{D}(1)$ is invariant under Yoshikawa moves $\Omega_1, \Omega_2$ and $\Omega_3$.
By Lemma~\ref{lem1},  the polynomial $[D](x,y,z,w)$ is  invariant under Yoshikawa moves $\Omega_5$, $\Omega_6$, $\Omega_6'$ and $\Omega_7$.
Since $z=0, w=0,$  the relations $p_1, \ldots, p_4$ and $q_1, \ldots, q_7$ hold, and hence $[D](x,y,z,w)$ is
invariant under Yoshikawa moves $\Omega_4, \Omega_4'$ and $\Omega_8$.  Since $t_{+}(D)$ is preserved under these Yoshikawa moves, we see the result.
\end{proof}


\section{On invariants $R_{D}'$ and $S_{D}'$}


We first prove the following.

\begin{thm}\label{thm:R}
Let $\mathcal L$ be a surface-link, and $D$ a marked graph diagram presenting $\mathcal L$.
\begin{itemize}
\item[(1)] If $\mathcal L$ is orientable, then
$R_{D}'(z)=2^N$, where $N$ is the number of connected components of $\mathcal L$.
\item[(2)] If $\mathcal L$ is non-orientable, then
$R_{D}'(z)=0$.
\end{itemize}
\end{thm}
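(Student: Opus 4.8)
The plan is to evaluate the state-sum directly at the R1 specialization $(x,y,z,w)=(0,0,z,z^{-1})$ and to identify which legal states survive. First I would read off from Fig.~\ref{fig-legal} the effect of setting $x=y=0$: the factor $\nu(c,\sigma)$ attached to a classical crossing $c$ vanishes exactly on the configurations assigned $x$ or $y$ (those in which the crossing carries a single label), and equals $z$ or $w=z^{-1}$ on the remaining legal configurations. Hence a legal state $\sigma$ contributes to $[D](0,0,z,z^{-1})$ only if no crossing is monochromatic, i.e. at every crossing the two transversal strands receive different labels, while at every marked vertex the label is automatically constant (the four incident arcs lie in one component of $\widehat D$). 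Writing $e(\sigma)$ for the number of $z$-crossings minus the number of $w$-crossings of such a state, we obtain $[D](0,0,z,z^{-1})=\sum_{\sigma} z^{e(\sigma)}$, a sum of monomials all with coefficient $+1$; in particular it can vanish only if there is no surviving state at all.

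Next I would match the surviving states with orientations of the marked graph $D$. The two local conditions above, constancy of the label at each marked vertex together with the different-label requirement across each crossing, are precisely the combinatorial shadow of the two admissible oriented forms of a rigid vertex; thus a surviving legal state determines, and is determined by, an orientation of $D$, equivalently an orientation of the proper surface $F(D)$ and hence of $\mathcal L$. Since $D$ is orientable if and only if $\mathcal L$ is, part (2) follows at once: when $\mathcal L$ is non-orientable there are no surviving states and the positive-monomial sum is $0$, so $R_D'(z)=0$. When $\mathcal L$ is orientable, its orientations are counted by one free binary choice (a reversal) on each of the $N$ connected components, giving exactly $2^N$ surviving states.

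It then remains to compute the exponent. I would show that for every surviving state $\sigma$ one has $e(\sigma)=-t_+(D)$, so that $z^{t_+(D)}z^{e(\sigma)}=1$ and each of the $2^N$ terms contributes $1$. The point is that, under the orientation encoded by $\sigma$, the rule assigning $z$ or $w$ to a crossing in Fig.~\ref{fig-legal} records exactly the local sign of that crossing in the positive resolution diagram $D_+=L_+(D)$; summing over all crossings yields $e(\sigma)=-sw(D_+)=-t_+(D)$. Because the self-writhe is independent of the chosen orientation, this exponent is the same for all $2^N$ states, whence $R_D'(z)=z^{t_+(D)}\sum_{\sigma} z^{e(\sigma)}=2^N$. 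As a consistency check one may instead invoke Theorem~\ref{thm1}: since $R_D'$ is a surface-link invariant, the values $2^N$ and the cancellation of the $z$-power are transparent already on a trivial diagram of a split union of unknotted spheres; the uniform computation above is what delivers the result for an arbitrary (knotted, higher-genus) orientable $\mathcal L$.

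The main obstacle is the orientation correspondence of the second paragraph together with the exponent bookkeeping of the third. One must verify, case by case over the legal configurations of Fig.~\ref{fig-legal}, that the non-monochromatic labelings at a crossing correspond bijectively to local orientations, and that the resulting $z$/$w$ assignment agrees with the sign of that crossing in $L_+(D)$. Establishing this globally, namely that a consistent label exists precisely when $D$ is orientable and that the induced exponent equals $-t_+(D)$ for each of the $2^N$ choices, is the crux; the vanishing in the non-orientable case is then a formal consequence of every surviving term being a positive monomial.
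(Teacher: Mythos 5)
Your opening step coincides with the paper's: at the specialization $(0,0,z,z^{-1})$ the only states that survive are those in which both strands switch labels at every classical crossing (the paper's \emph{bad} states, Definition~\ref{def-good-bad}), each contributing a monomial $z^{e(\sigma)}$ with coefficient $+1$, so the sum vanishes exactly when no such state exists. After that, however, both of the steps you yourself flag as ``the crux'' have genuine gaps, and one of them is not repairable as stated. The claimed bijection between surviving states and orientations of $D$ cannot be verified ``case by case over the legal configurations'': a $\{0,1\}$-label on arcs of $\widehat D$ carries no directional information, and the existence question for bad states, like orientability itself, is a \emph{global} parity condition, not a local one. The paper handles it with companion loops: for orientable $D$, alternating labels propagate consistently because every companion loop meets an even number of crossings (Lemma~\ref{lem-1}, a planar intersection-number argument), which yields exactly $2^N$ bad states (Lemma~\ref{lem-2}); for non-orientable $D$ there is a companion loop through an odd number of crossings (Lemma~\ref{lem-3}, quoted from Asada's paper), which forbids any bad state (Lemma~\ref{lem-4}). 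Some global argument of this kind is unavoidable, and your proposal does not contain one.

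The second gap is fatal as written: your exponent bookkeeping never uses admissibility of $D$, and without it the claim is false. Even granting that a surviving state induces an orientation under which $\nu(c,\sigma)$ records the sign of each crossing, summing signs over \emph{all} crossings of $D_+$ gives the writhe $w(D_+)$ of that orientation, not the self-writhe $sw(D_+)=t_+(D)$; the two differ by twice the total linking number. Concretely, run your argument on the standard $2$-crossing diagram of the Hopf link (a marked graph diagram with no marked vertices, so $D_+=D$ and $t_+(D)=sw(D)=0$): it has four bad states, two contributing $z^{2}$ and two contributing $z^{-2}$, so $[D](0,0,z,z^{-1})=2z^2+2z^{-2}$, and the exponents are neither equal to one another nor to $-sw(D)$. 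Your argument would thus ``prove'' that $R^{\rm unori}_D\equiv 2^N$ for every classical link, contradicting the fact that Lipson's invariant is a nontrivial specialization of the Kauffman polynomial. What saves the theorem is that a diagram presenting a surface-link is \emph{admissible}, so $L_+(D)$ is a trivial link and all linking numbers vanish; this is exactly the ingredient your proof omits. The paper inserts it differently and more economically: using Theorem~\ref{thm1} it first normalizes $t_+(D)=0$, passes from bad states of $D$ to the corresponding bad states of $D_+$, and then invokes Lemma~\ref{lem-2B}, whose proof uses Lipson's classical invariance (Theorem~\ref{lipson2-thm}) on the trivial link $L_+(D)$ together with positivity of the coefficients to force every bad state to contribute exactly $1$. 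To fix your proof you must either import that lemma or explicitly feed the triviality of $L_+(D)$ (hence vanishing of linking numbers) into your sign computation.
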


This theorem shows that the invariant $R_{D}'(z)$ can detect orientability of a surface-link.

\bigskip

Let $D$ be a marked graph diagram.  By $|D|$ we denote the $4$-valent graph in $\mathbb R^2$ obtained from $D$ by removing markers and by assuming crossings to be vertices of valency $4$.   Let $v$ be a point of $|D|$.

By a {\it companion loop} $\ell$ on the diagram $D$ {\it with base point }$v$,
we mean a path  $\ell: [0,1] \to |D|$ with $\ell(0)= \ell(1)= v$
such that
(i) when $\ell$ meets a crossing of $D$, it must go straight and that (ii) when $\ell$ meets a marked vertex of $D$, it must turn right or left (Fig.~\ref{ori-pass}), and (iii) $\ell$ does not go through any edge of $|D|$ twice.
(When the base point $v$ is a marked vertex of $D$, we do not assume the condition (ii) there.  A companion loop may passes a crossing or a  marked vertex twice.)

\begin{figure}[h]
\begin{center}
\resizebox{0.6\textwidth}{!}{%
  \includegraphics{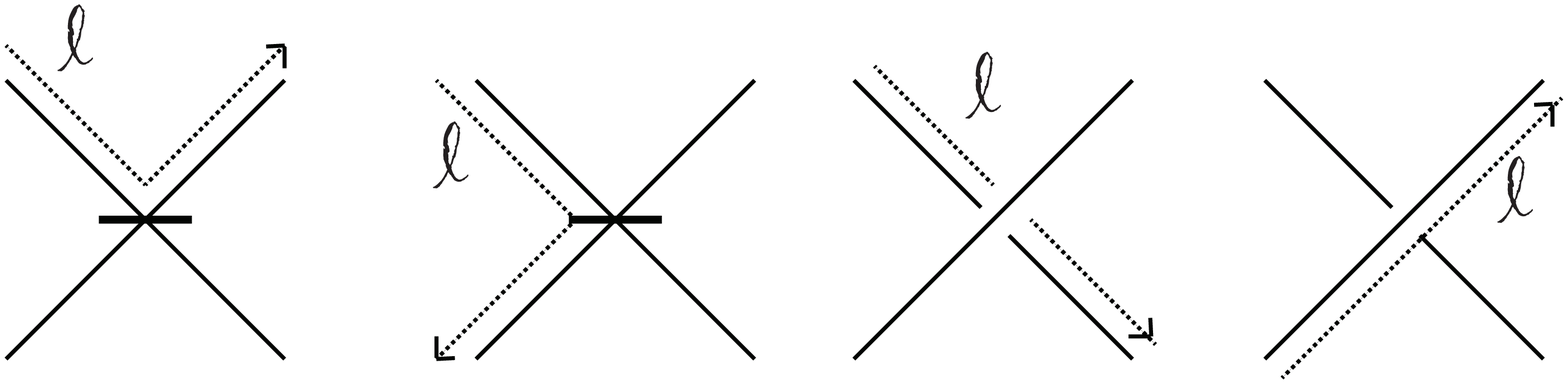} }
\caption{Rules of a companion loop}\label{ori-pass}
\end{center}
\end{figure}

When $D$ is an oriented marked graph diagram,  any companion loop $\ell$ satisfies either that (i)
for any arc on $D$ on which $\ell$ runs,
the orientation of the arc of $D$ is in the same direction of $\ell$ or that (ii)
for any arc on $D$ on which $\ell$ runs,
the orientation of the arc of $D$ is in the opposite direction of $\ell$.
In the former case, we call $\ell$ to be {\it compatible} with respect to the orientation of $D$.

\begin{lem} \label{lem-1}
Let $D$ be an oriented marked graph diagram. Let $\ell$ be a companion loop on $D$. The number of classical crossings where $\ell$ passes is even. (When $\ell$ passes a crossing twice, we count it twice.)
\end{lem}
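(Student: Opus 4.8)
The plan is to discard all overstrand/understrand information and work entirely with the underlying $4$-valent plane graph $|D|$, using a checkerboard coloring of its complementary regions. Since every vertex of $|D|$ (whether a classical crossing or a marked vertex) has valency $4$, in particular even valency, the faces of $|D|$ in $\mathbb R^2$ admit a checkerboard $2$-coloring by black and white in which two regions sharing an edge receive opposite colors. I would fix such a coloring once and for all; every edge of $|D|$ then has a black region on one side and a white region on the other.

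Next I would read off $\ell$ as a cyclic sequence of directed edges $d_1 \to d_2 \to \cdots \to d_m \to d_1$ of $|D|$, where consecutive directed edges meet at a vertex through which $\ell$ passes. To each directed edge $d_i$ I assign its \emph{left color}, namely the color of the region lying on the left-hand side as one travels along $d_i$ in its chosen direction. The crux is a purely local computation at a single vertex, placing it at the origin with its four edge-ends along the coordinate half-axes and the four complementary quadrants colored alternately. A short check should show that when $\ell$ goes \emph{straight} through the vertex (the case of a classical crossing, where $\ell$ leaves along the edge opposite the one it entered) the left color switches, whereas when $\ell$ \emph{turns} (the case of a marked vertex, where by definition $\ell$ exits along an edge adjacent to the one it entered) the left color is unchanged. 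Both a left turn and a right turn preserve the left color, so it does not matter which way $\ell$ turns at a marked vertex.

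Granting this local statement, the global conclusion is immediate: traversing $\ell$ once and returning to the starting directed edge $d_1$, the left color must return to its initial value, so the total number of switches is even. Since a switch occurs exactly at each passage through a classical crossing and at no passage through a marked vertex, the number of crossings through which $\ell$ passes --- counted with multiplicity when $\ell$ meets a crossing twice --- is even, as claimed.

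The main obstacle is the local computation and making sure it is genuinely independent of the choices involved. Concretely I would verify, with the quadrants labelled $Q_1,\dots,Q_4$ and colored $Q_1,Q_3$ black and $Q_2,Q_4$ white, that passing straight from the $-x$ edge to the $+x$ edge sends the left region from $Q_2$ to $Q_1$ (a color switch), while turning from the $-x$ edge to either the $+y$ edge or the $-y$ edge keeps the left region white (no switch); the remaining incoming directions and the symmetric cases follow by rotation. Two further points need a remark rather than real work: the checkerboard coloring exists precisely because every vertex of $|D|$ has even valency, and the bookkeeping must treat a double passage through a vertex (and the case where the base point $v$ is itself a vertex) by counting each passage separately, which the directed-edge formulation does automatically. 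I note finally that this argument does not actually use the orientation of $D$; the hypothesis that $D$ be oriented is the standing assumption under which companion loops are discussed here, and one may alternatively deduce the lemma from the compatibility dichotomy recorded before its statement.
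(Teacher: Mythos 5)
Your approach is genuinely different from the paper's: the paper uses the orientation of $D$ to orient the edges of $|D|$, deletes the edges traversed by $\ell$, observes that what remains is a $1$-cycle $E$ in the sense of singular homology, and gets the parity from the vanishing of the algebraic intersection number of $E$ with a small perturbation of $\ell$ in $\mathbb{R}^2$. Your checkerboard computation is correct as far as it goes: the faces of the $4$-valent plane graph $|D|$ admit a proper $2$-coloring, a straight passage through a vertex switches the left color, a turn (either way) preserves it, and one circuit of the closed loop returns the color to its initial value, so the total number of straight passages is even.

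But that is not quite the lemma, and the discrepancy is exactly the point you wave away at the end. The paper's definition of a companion loop waives condition (ii) at the base point: when $v$ is a marked vertex, $\ell$ is allowed to pass \emph{straight} through $v$. Your local computation detects straightness, not the type of vertex, so a straight passage at the base marked vertex also switches the left color, and what your argument actually proves is that (number of crossing passages) $+$ (number of straight passages at marked vertices) is even --- for arbitrary, possibly non-orientable, $D$. This cannot be upgraded to the lemma without invoking the orientation: Lemma~\ref{lem-3} produces, on any non-orientable diagram, a companion loop meeting an odd number of classical crossings (it is straight, ``in a diagonal position,'' at its base marked vertex), so your closing remark that the argument ``does not actually use the orientation of $D$'' is the symptom of the gap, not a bonus. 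To close the gap, use orientedness where it must enter: at a marked vertex of an oriented marked graph diagram the two edges of a diagonal pair are both directed in or both directed out, so a straight passage there flips the compatibility of $\ell$ with the orientation of $D$, whereas straight passages at crossings and turns at marked vertices preserve compatibility; since the compatibility state returns to itself after one circuit of $\ell$, the number of straight marked-vertex passages is even ($0$ or $2$, both necessarily at the base point), and your checkerboard parity then yields the lemma. (For fairness, the paper's own proof also tacitly assumes $\ell$ turns at every marked vertex --- otherwise the deleted-edge graph $E$ fails to be a cycle --- so it needs the same supplementary remark.)
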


\begin{proof}
The orientation of $D$ induces orientations of the edges of the $4$-valent graph $|D|$.  Let $E$ be the oriented graph in $\mathbb R^2$ obtained from $|D|$ by removing the edges where $\ell$ passes.  Then $E$ is regarded as a $1$-cycle of $\mathbb R^2$ in the sense of singular homology theory.
Modifying $\ell$ slightly, say $\ell'$, we assume that $E$ and $\ell'$ intersect transversely and each intersection corresponds to a classical crossing of $D$ where $\ell$ passes exactly once.  Since the algebraic intersection number of $\ell'$ and $E$ must be zero, we see that the number of crossings of $D$ where $\ell$ passes exactly once is even.  Thus the number of crossings of $D$ where $\ell$ passes is even.
\end{proof}

\begin{defn} \label{def-good-bad}
A state of $D$ is $\it{good}$ if at each classical crossing,
the labels occur as $
\xy (-4,4);(-0.5,0.5) **@{-},
(4,-4);(0.5,-0.5) **@{-},
(4,4);(-4,-4) **@{-},
(-4,5)*{a}, (4,5)*{a}, (-4,-5)*{a}, (4,-5)*{a},
\endxy ~\hbox{or}~
\xy (-4,4);(-0.5,0.5) **@{-},
(4,-4);(0.5,-0.5) **@{-},
(4,4);(-4,-4) **@{-},
(-4,5)*{a}, (4,5)*{b}, (-4,-5)*{b}, (4,-5)*{a}
\endxy$. A state is $\it{bad}$ if at each classical crossing, the labels occur as $
\xy (-4,4);(-0.5,0.5) **@{-},
(4,-4);(0.5,-0.5) **@{-},
(4,4);(-4,-4) **@{-},
(-4,5)*{a}, (4,5)*{a}, (-4,-5)*{b}, (4,-5)*{b},
\endxy ~\hbox{or}~
\xy (-4,4);(-0.5,0.5) **@{-},
(4,-4);(0.5,-0.5) **@{-},
(4,4);(-4,-4) **@{-},
(-4,5)*{a}, (4,5)*{b}, (-4,-5)*{a}, (4,-5)*{b},
\endxy$.
\end{defn}

\begin{lem} \label{lem-2}
Let $D = D_1 \cup \dots \cup D_N$ be an oriented marked graph diagram, and let $F(D)
= F_1 \cup \dots \cup F_N$ be the proper surface in $\mathbb R^3 \times [-1,1]$ associated with $D$, where
$F_i$ $(i=1, \dots, N)$ is a connected component of $F$ and $D_i$ is the corresponding diagram.
For each $i \in \{ 1, \dots, N\}$, take a connected component of $\widehat{D}$, say $s_i$,
 such that $s_i \subset D_i$. Then,
for any $(x_1, \ldots, x_N) \in \{0,1\}^{N}$, there exists a unique bad state $\sigma$ of $D$ with $\sigma(s_i)=x_i$ $(i=1, \dots, N)$.
\end{lem}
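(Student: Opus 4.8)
The plan is to present the set of bad states of $D$ as a torsor over the $\mathbb{Z}/2$-vector space of states that are constant on each subdiagram $D_i$, and then to identify this torsor with $\{0,1\}^N$ via evaluation at $s_1,\dots,s_N$. First I read labels in $\mathbb{Z}/2$ and reformulate Definition~\ref{def-good-bad}: at a classical crossing write the two strands as the over-arc and the under-arc, and note that a state is bad exactly when \emph{both} of these arcs change their label as they run straight through the crossing (the top--bottom picture and the left--right picture are precisely the two ways for both arcs to flip, and conversely ``both arcs flip'' forces a legal $2$--$2$ split of top--bottom or left--right type). At a marked vertex the four incident arcs lie in a single connected component of $\widehat{D}$, so every state is automatically constant there and bad states are constrained only at crossings.

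The first main step is the difference space. If $\sigma,\sigma'$ are bad states, then $\tau=\sigma+\sigma'$ assigns equal labels to the two straight arcs at every crossing; that is, $\tau$ is unchanged on passing straight through a crossing and unchanged through a marked vertex. I claim this says exactly that $\tau$ is constant on each $D_i$. Indeed, two arcs of $\widehat{D}$ lie on the same surface component $F_i$ if and only if they are joined by a path that goes straight at crossings (the over- and under-sheets do not meet at a crossing, so a straight passage stays on one sheet) and passes freely through marked vertices (a saddle locally joins all four of its arcs); this is the same equivalence that forces $\tau$ to be constant. Conversely, adding to a bad state any state that is constant on each $D_i$ again yields a bad state, since the two straight arcs at a crossing always lie on the same $D_i$. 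Hence, \emph{once a single bad state exists}, the bad states form a torsor over the space $T$ of states constant on each $D_i$, and $T\cong(\mathbb{Z}/2)^N$.

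The second, and crucial, step is to produce one bad state; this is the only place the orientation enters. Form the graph $H$ whose vertices are the components of $\widehat{D}$ and whose edges are the straight passages at the crossings of $D$. A bad state is exactly a $\mathbb{Z}/2$-labeling of the vertices of $H$ that changes across every edge, and such a labeling exists if and only if the ``flip'' $1$-cochain (value $1$ on each edge) vanishes on $H_1(H;\mathbb{Z}/2)$, i.e.\ every cycle of $H$ uses an even number of edges. A cycle of $H$ can be realized as a closed curve $\gamma$ in $|D|$ that goes straight at each crossing it meets, and the number of $H$-edges it uses equals the number of crossings it passes. Choosing the route through each component of $\widehat{D}$ to be compatible with the orientation makes $\gamma$ (a concatenation of) companion loop(s) in the sense preceding Lemma~\ref{lem-1}, so by Lemma~\ref{lem-1} the number of crossings passed is even. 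Therefore the flip cochain vanishes on every cycle and a bad state exists. Combining this with the first step, the bad states equal $\sigma_0+T$ for any fixed bad state $\sigma_0$; since $s_i\subset D_i$, the map $\tau\mapsto(\sigma_0(s_i)+\tau_i)_i$ is an affine bijection $T\to\{0,1\}^N$, giving for each prescribed $(x_1,\dots,x_N)$ a unique bad state $\sigma$ with $\sigma(s_i)=x_i$.

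The main obstacle is the existence step, and inside it the reduction of the flip-consistency cycles to honest companion loops so that Lemma~\ref{lem-1} applies: one must check that, using the oriented local model at a marked vertex, any cycle of $H$ is homologous to a combination of companion loops (equivalently, that $\gamma$ can be rerouted within each component of $\widehat{D}$ to be orientation-compatible, so that the edges it misses form a $1$-cycle and the passed crossings become the mod-$2$ intersection points in $\mathbb{R}^2$). Everything else is linear algebra over $\mathbb{Z}/2$ together with the identification of $\widehat{D}$-connectivity (straight passages and saddles) with the connected components $F_1,\dots,F_N$ of the surface $F(D)$; note that the orientation hypothesis is genuinely needed, matching the vanishing of $R_D'$ in the non-orientable case.
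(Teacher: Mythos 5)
Your first step (the torsor structure) is correct and is actually cleaner than what the paper does: the difference of two bad states is constant along straight passages and across marked vertices, this connectivity relation is exactly the decomposition $D_1\cup\dots\cup D_N$ induced by the components of $F(D)$, and adding a state constant on each $D_i$ to a bad state gives a bad state; so uniqueness, and the bijection with $\{0,1\}^N$, follow once a single bad state exists. The gap is in your existence step, precisely at the point you flagged. It is not true that every cycle of the passage graph $H$ can be realized as (a concatenation of) companion loops by rerouting within components of $\widehat{D}$, nor that routes can always be chosen orientation-compatibly. The problem is that a cycle of $H$ prescribes the arc-ends at which it enters and leaves each component of $\widehat{D}$, and these two ends may only be joinable by paths that pass \emph{straight} (diagonally) through marked vertices, which a companion loop is forbidden to do. Concretely: let a component of $\widehat{D}$ consist of two marked vertices $m_1,m_2$ joined by a single arc running from the SE position of $m_1$ to the NE position of $m_2$, with the remaining six arcs ending at crossings; this is orientable (orient NE, SW inward and NW, SE outward at each vertex), yet any route from the SW arc of $m_2$ to the NW arc of $m_1$ must go straight at both $m_2$ and $m_1$, and one checks directly that no orientation-compatible (directed) route exists either. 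Such a component, with the outside of the diagram closing the relevant strands into a cycle of $H$, occurs in honest oriented marked graph diagrams, so Lemma~\ref{lem-1} simply does not apply to all the cycles you need, and your bipartiteness claim is left unproved.

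The repair keeps your architecture but needs a strengthened parity lemma in place of Lemma~\ref{lem-1}: for \emph{any} closed curve $\gamma$ in $|D|$ that goes straight at every crossing it meets and repeats no edge (no condition at marked vertices), the number of crossing passages is congruent mod $2$ to the number of straight passages at marked vertices --- this is the $\mathbb{Z}/2$-intersection argument of Lemma~\ref{lem-3}'s proof, since at a marked vertex passed straight the two leftover edges form the other diagonal and contribute one intersection point; and the latter number is even because, at an oriented marked vertex, the two edges of a diagonal pair point both inward or both outward, so a straight passage reverses the compatibility of $\gamma$ with the orientation of $D$ while straight passages at crossings and turns at marked vertices preserve it, and a closed curve reverses compatibility an even number of times. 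This is where orientability genuinely enters, and it yields evenness of all cycles of $H$, after which your torsor argument finishes the proof. For comparison, the paper's own proof is the propagation version of your argument: it spreads alternating labels along companion loops starting from $s_i$ and from marked vertices, invoking Lemma~\ref{lem-1} loop by loop, and asserts without further argument that the resulting state is well defined --- which is exactly the global consistency (bipartiteness) statement above. So your write-up isolates the true crux better than the paper, but the specific mechanism you propose for it fails, and an additional orientation argument at marked vertices is indispensable.
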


\begin{proof}
We define a state $\sigma$ of $D$ as follows.
For each $i$,
consider a companion loop $\ell_i$ on $D$ whose base point is in $s_i$.
By Lemma~\ref{lem-1}, we can give labels $a$ and $b$ alternatively to the connected components of $\widehat{D}$ along $\ell_i$ such that $\sigma(s_i)=x_i$.  For any marked vertex on $\ell_i$, consider another companion loop whose base point is the marked vertex and give labels  $a$ and $b$ alternatively to the connected components of $\widehat{D}$ along the loop.  Continue this procedure until all connected components of $\widehat{D}$ contained in $D_i$ are given labels $a$ and $b$.  Then we see that the state $\sigma$ is a unique bad state with $\sigma(s_i)=x_i$  $(i=1, \dots, N)$.
\end{proof}

Let $D$ be a link diagram or a marked graph diagram.  For a legal state $\sigma$ of $D$, we define
$W_\sigma$ by
$$ W_\sigma = W_\sigma (x, y, z, w) = \prod_{\text{crossings}~c} \nu(c,\sigma) \in \mathbb Z[x,y,z,w].$$
Then by definition, we have
$$ [D] = [D](x,y,z,w) = \sum_{\text{legal states}~\sigma} W_\sigma \in \mathbb Z[x,y,z,w].$$

\begin{lem}\label{lem-2B}
Let $D$ be a link diagram with $sw(D)=0$.  Suppose that $D$ presents a trivial link.
If $\sigma$ is a bad  state of $D$, then $W_\sigma(0,0,z,z^{-1})=1$.
\end{lem}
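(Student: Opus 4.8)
The plan is to prove the per-state identity for all bad states at once, by computing the full specialized sum $[D](0,0,z,z^{-1})$ globally and then extracting the individual contributions. The key point is that triviality of the link is a global hypothesis, so I do not expect a purely local, crossing-by-crossing evaluation of $W_\sigma$ to suffice; instead I would feed triviality in through the invariance of $R^{\rm unori}_D$ already established in Theorem~\ref{lipson2-thm}.

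First I would record the local structure. For a \emph{bad} state $\sigma$ (Definition~\ref{def-good-bad}), at every crossing the four labels form one of the two ``split'' patterns, which by Fig.~\ref{fig-legal} carry the weights $z$ and $w$; the remaining legal patterns (``all same'' and ``diagonal'') carry the weights $x$ and $y$. Hence, after setting $x=y=0$ and $w=z^{-1}$, every crossing of a bad state contributes a factor $z$ or $z^{-1}$, so $W_\sigma(0,0,z,z^{-1})=z^{k_\sigma}$ is a single power of $z$ with coefficient $1$, and the goal reduces to showing $k_\sigma=0$. The same reading of Fig.~\ref{fig-legal} shows that any legal state that is not bad has at least one crossing of ``all same'' or ``diagonal'' type, hence a factor $x$ or $y$, so it contributes $0$ at $x=y=0$. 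Therefore $[D](0,0,z,z^{-1})=\sum_{\text{bad }\sigma} z^{k_\sigma}$.

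Next I would compute the left-hand side using triviality. On the standard crossingless diagram $D_0$ of the trivial $N$-component link there are no crossings, so all $2^N$ labellings are legal and each has empty product; thus $[D_0](0,0,z,z^{-1})=2^N$ and $sw(D_0)=0$, giving $R^{\rm unori}_{D_0}(z)=2^N$. Since $R^{\rm unori}$ is an invariant of unoriented links (Theorem~\ref{lipson2-thm}) and $D$ presents the same trivial link, $R^{\rm unori}_D(z)=2^N$; as $sw(D)=0$ by hypothesis, this yields $[D](0,0,z,z^{-1})=2^N$.

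Finally I would combine the two computations. By Lemma~\ref{lem-2}, applied to $D$ (a link diagram is orientable), there are exactly $2^N$ bad states, so the previous displayed identity reads $\sum_{\text{bad }\sigma} z^{k_\sigma}=2^N$: a sum of exactly $2^N$ monomials, each a power of $z$ with coefficient $1$, equal to the constant $2^N$. The number of $\sigma$ with $k_\sigma=0$ is then the coefficient of $z^0$, namely $2^N$, which is the total number of terms; hence no $\sigma$ has $k_\sigma\neq 0$, i.e. $W_\sigma(0,0,z,z^{-1})=1$ for every bad state $\sigma$. I expect the one delicate point to be the bookkeeping in the second paragraph, namely verifying from Fig.~\ref{fig-legal} that the two split patterns are precisely those weighted by $z,w$ while the all-same and diagonal patterns are weighted by $x,y$, since this is exactly what guarantees both that the bad states are the only surviving states and that each contributes a monic power of $z$. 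As a sanity check one may set $z=1$: then $W_\sigma(0,0,1,1)=1$ for every bad state directly, and $[D](0,0,1,1)$ simply counts the bad states, recovering $2^N$.
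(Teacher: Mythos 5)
Your proof is correct and takes essentially the same route as the paper's: annihilate all non-bad states by setting $x=y=0$, identify $[D](0,0,z,z^{-1})$ with the link invariant $R^{\rm unori}_D(z)$ (using $sw(D)=0$), evaluate it on a crossingless diagram of the trivial link to get $2^N$, and conclude that each bad state must contribute $1$. The only difference is that you spell out two steps the paper leaves implicit, namely that each bad state's weight $W_\sigma(0,0,z,z^{-1})$ is a monic power of $z$ and the counting/positivity argument forcing every exponent to be zero.
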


\begin{proof}
Since $sw(D)=0$, we have $R^{\rm unori}_D(z) = [D](0,0,z,z^{-1})$.  Note that for a legal state $\sigma'$ that is not a bad state, $W_{\sigma'}(0,0,z,z^{-1})=0$.  Thus
$$
[D](0,0,z,z^{-1}) = \sum_{\text{bad  states}~\sigma} W_\sigma(0,0,z,z^{-1}) \in \mathbb Z[z,z^{-1}].$$
On the other hand, for a trivial link diagram $D_0$
presenting the same link with $D$, we have
$$
[D_0](0,0,z,z^{-1}) = 2^N, $$
where $N$ is the number of components of $D_0$.  Thus we have $W_\sigma(0,0,z,z^{-1}) =1$ for each bad  state $\sigma$ of $D$.
\end{proof}

\noindent
{\it Proof of (1) of Theorem~$\ref{thm:R}$.}
Let $D = D_1 \cup \dots \cup D_N$ be an oriented marked graph diagram and $F(D) = F_1 \cup \dots \cup F_N$ the proper oriented surface in $\mathbb R^3 \times [-1,1]$ associated with $D$, where $F_i$ $(i=1, \dots, N)$ is a connected component of $F$ and $D_i$ is the corresponding diagram.  Attaching minimal disks and maximal disks to $F(D)$ in $\mathbb R^4$, we obtain $\mathcal L = \mathcal L_1 \cup \dots \cup \mathcal L_N$.
In this situation, we prove that $R'_D(z)= 2^N$.

Since $R'_D(z)$ is invariant under Yoshikawa moves, applying $\Omega_1$ (and its mirror image operation, that is a consequence of $\Omega_1$ and $\Omega_2$), we may assume that $t_+(D) = sw(D_+) = 0$, where $D_+$ is the positive resolution diagram of $D$.  Then
$$
R'_D(z) = [D](0,0,z,z^{-1}) = \sum_{\text{legal states}~\sigma} W_\sigma(0,0,z,z^{-1}).$$
Since $W_\sigma(0,0,z,z^{-1})=0$ for any legal state $\sigma$ that is not a bad state, we have
$$
R'_D(z) =  \sum_{\text{bad  states}~\sigma} W_\sigma(0,0,z,z^{-1}),$$
where $\sigma$ runs over all bad states of $D$.  By Lemma~\ref{lem-2}, there exists $2^N$ bad  states.

For each bad state $\sigma$ of $D$, let $\sigma_+$ be the corresponding bad  state of $D_+$.  Then
$W_\sigma(0,0,z,z^{-1}) = W_{\sigma_+}(0,0,z,z^{-1}) = 1$ by Lemma~\ref{lem-2B}.
Thus $R'_D(z) = 2^N$.
This completes the proof of (1) of Theorem~$\ref{thm:R}$.

\begin{lem} \label{lem-3}
Let $D$ be a non-orientable marked graph diagram.  There exists a companion loop $\ell$ on $D$ such that
the number of classical crossings where $\ell$ passes is odd.
\end{lem}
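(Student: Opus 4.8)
The plan is to produce an \emph{orientation-reversing companion loop}, mirroring the homological argument of Lemma~\ref{lem-1} but run in reverse and over $\mathbb{Z}/2$.

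First I would record the local effect of passing a vertex on a consistent choice of orientation along a path in $|D|$. As in the discussion preceding Lemma~\ref{lem-1}, going straight through a classical crossing, or turning at a marked vertex, preserves the alignment between the path and a would-be orientation of $D$, whereas going \emph{straight} through a marked vertex reverses it; the latter is exactly the configuration excluded by the oriented-vertex pictures in Section~\ref{sect-mgd}. Consequently, an orientation of $D$ is the same as a trivialization of the resulting $\mathbb{Z}/2$ ``flip'' local system on $|D|$, and $D$ is non-orientable precisely when this local system has nontrivial monodromy, i.e.\ when there is a closed path in $|D|$ that is straight at every crossing and passes straight through an \emph{odd} number of marked vertices.

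Next I would count crossings along such a path by the $\mathbb{Z}/2$ analogue of the proof of Lemma~\ref{lem-1}. For any closed path $\ell$ the graph $E := |D| \setminus \{\text{edges of } \ell\}$ has even valence at every vertex, since a straight or turning passage deletes two of the four edges there (and a doubly traversed vertex deletes all four); hence $E$ is a $1$-cycle with $\mathbb{Z}/2$ coefficients \emph{without} any appeal to an orientation. Since $H_1(\mathbb{R}^2;\mathbb{Z}/2)=0$, a small transverse push-off $\ell'$ satisfies $\ell' \cdot E = 0$. Each classical crossing that $\ell$ passes once, and each marked vertex that $\ell$ passes straight through, contributes exactly one transverse intersection of $\ell'$ with $E$, while each marked vertex where $\ell$ turns contributes none. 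Thus the number of classical crossings met by $\ell$ is congruent mod~$2$ to the number of marked vertices it traverses straight; for an orientation-reversing loop the latter is odd, so $\ell$ meets an odd number of crossings.

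The remaining, and I expect hardest, step is to arrange that this orientation-reversing loop is an honest companion loop, that is, that it turns at every marked vertex except its base and uses no edge twice. Since a companion loop may go straight only at its base point, I must reduce the (odd) number of straight passages to exactly one. The plan is to start from an arbitrary orientation-reversing closed path and delete superfluous straight passages in pairs by rerouting along the two smoothings at the offending marked vertices---an operation that changes the number of straight passages by an even amount, hence preserves both the odd parity and (by the previous paragraph) the odd crossing count---and then to pass to a shortest, hence edge-embedded, representative so as to secure condition~(iii). Verifying that such a reduction always terminates in a single-straight-pass companion loop satisfying every clause of the definition is the crux of the argument; equivalently, one may run the whole proof contrapositively, orienting $L_+(D)$ componentwise and using the crossing count above to check the good-vertex condition at each marked vertex, concluding that $D$ is orientable whenever no companion loop meets an odd number of crossings.
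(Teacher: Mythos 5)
Your first two paragraphs are essentially correct, and they in fact reproduce the paper's own mechanism: the observation that a diagonal (straight) passage through a marked vertex is the unique orientation-reversing local transition, and the $\mathbb Z/2\mathbb Z$ push-off/intersection-number count over the plane, which is exactly how the paper argues both in Lemma~\ref{lem-1} and in this lemma. Your congruence ``number of crossings met $\equiv$ number of straight passages through marked vertices $\pmod 2$'' is a clean generalization of that computation.

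The problem is your third step, and you have identified it yourself: the lemma asserts the existence of a \emph{companion loop}, which by definition may go straight at a marked vertex only at its base point and may not traverse any edge of $|D|$ twice, whereas your local-system argument only produces a closed path, straight at crossings, with an odd number of diagonal passages and possibly repeated edges. Upgrading that path to an honest companion loop is precisely the statement the paper does \emph{not} prove either; it is quoted from Asada \cite{As} (the existence of a marked vertex $v$ and a companion loop diagonal at $v$), and your proposed repair does not close it. Concretely: (a) ``rerouting along the two smoothings at an offending marked vertex'' is not a local operation --- if the walk enters at the SW germ and exits at NE, redirecting it to exit at SE leaves the old continuation at NE dangling, and nothing guarantees the walk can be re-closed while respecting the allowed transitions; (b) your parity argument assumes no repeated edges (if two passages through a vertex share an edge, only three edge-germs are deleted there and $E$ fails to be a $\mathbb Z/2\mathbb Z$-cycle), yet the intermediate walks in your reduction can repeat edges; (c) splicing at a repeated edge traversed in opposite directions creates a backtracking transition, so ``pass to a shortest representative'' does not obviously preserve the transition rules or the single-straight-passage property. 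So the crux is genuinely open in your write-up; the honest fix is to invoke \cite{As} as the paper does, or to carry out the combinatorial reduction in full. It is worth noting that for the only application of this lemma, Lemma~\ref{lem-4}, your weaker object would actually suffice (a bad state's labels flip at every crossing passage and are constant through any passage of a marked vertex, so an odd crossing count along \emph{any} such closed walk already gives the contradiction); but that proves a variant of the statement, not the statement as written.
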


\begin{proof}
It is known that, since $D$ is non-orientable, there exists a marked vertex $v$, and a companion loop $\ell$ of $D$ with base point $v$ such that the loop $\ell$ appear in a diagonal position at $v$ (cf. \cite{As}). Regard the $4$-valent graph $|D|$ as a $1$-cycle of $\mathbb R^2$ with the coefficient group $\mathbb Z/ 2 \mathbb Z$ in the sense of singular homology theory. Let $E$ be the graph in $\mathbb R^2$ obtained from $|D|$ by removing all edges where $\ell$ passes. The graph $E$ is also regarded as a $1$-cycle of $\mathbb R^2$ with the coefficient group $\mathbb Z/ 2 \mathbb Z$.  Modifying $\ell$ slightly, say $\ell'$, we assume that $E$ and $\ell'$ intersect transversely and each intersection corresponds to a classical crossing of $D$ where $\ell$ passes exactly once or the vertex $v$.  Since the algebraic ($\mathbb Z/ 2 \mathbb Z$-) intersection number of $E$ and $\ell'$  is zero, we see that the number of classical crossings of $D$ where $\ell$ passes exactly once is odd.  Thus the number of classical crossings of $D$ where $\ell$ passes is odd.
\end{proof}

\begin{lem} \label{lem-4}
Let $D$ be a non-orientable marked graph diagram.  There exist no bad states of $D$.
\end{lem}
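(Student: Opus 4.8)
The plan is to argue by contradiction, using the companion loop produced by Lemma~\ref{lem-3}. The central point is to understand how the labels of a \emph{bad} state behave as one travels along a companion loop, and for this I would record two local facts. At a marked vertex the four incident edges are not separated when we form $\widehat{D}$ (only crossings are removed), so they all lie in a single connected component of $\widehat{D}$; hence any state assigns them a common label. Consequently, whenever a companion loop turns at a marked vertex, the label of the component it occupies is unchanged. At a classical crossing the loop instead goes straight, entering at one corner and leaving at the diagonally opposite corner. Inspecting the two bad configurations in Definition~\ref{def-good-bad}, in each of them the two diagonally opposite corners of a crossing carry opposite labels (in both the ``horizontal'' and the ``vertical'' pattern, the corner labelled $a$ is diagonally across from a corner labelled $b$). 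Therefore, in a bad state the label flips each time the loop passes straight through a crossing, while it is preserved each time the loop meets a marked vertex.

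Next I would combine these two observations. Fix, for contradiction, a bad state $\sigma$ of $D$ together with a companion loop $\ell$ based at a point $v$. As $\ell$ is traversed once and returns to $v$, the occupied label is unchanged at every marked vertex and is flipped at every passage through a classical crossing. Since $\ell$ is a \emph{closed} loop, the label read off upon return to $v$ must coincide with the label at the start; hence the total number of crossing-passages along $\ell$ (counted with multiplicity when $\ell$ meets a crossing twice) is forced to be even. In short, the existence of a bad state requires every companion loop of $D$ to pass through an even number of classical crossings.

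Finally I would invoke Lemma~\ref{lem-3}: since $D$ is non-orientable, there exists a companion loop $\ell$ passing through an \emph{odd} number of classical crossings. If $D$ admitted a bad state, the previous paragraph applied to this particular $\ell$ would force that number to be even, a contradiction. Hence $D$ has no bad state, which is the assertion of the lemma.

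The step I expect to demand the most care is the local analysis at a crossing: one must verify that in \emph{both} bad patterns of Definition~\ref{def-good-bad} the diagonal corners always receive opposite labels, so that going straight through a crossing invariably flips the label, and one must handle correctly the case in which $\ell$ meets a crossing twice (where the flip is counted with multiplicity) as well as the special base point of Lemma~\ref{lem-3}, where $\ell$ runs diagonally through a marked vertex rather than turning. Because all edges incident to a marked vertex share one label regardless of whether $\ell$ turns or runs straight there, this base-point subtlety does not affect the parity count, and the argument closes.
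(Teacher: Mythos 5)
Your proof is correct and takes essentially the same approach as the paper: both invoke Lemma~\ref{lem-3} to produce a companion loop passing an odd number of classical crossings, and derive a contradiction from the fact that in a bad state the label must alternate at each crossing traversal along a closed loop. Your version simply makes explicit what the paper leaves implicit (the diagonal-corner label flip in both bad patterns, label preservation at marked vertices, and the base-point subtlety), which is a sound elaboration rather than a different argument.
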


\begin{proof}
By Lemma~\ref{lem-3}, there exists a companion loop $\ell$ of $D$ such that the number of classical crossings of $D$ where $\ell$ passes is odd.  If there exists a bad  state, the labels $0$ and $1$ appear alternatively along $\ell$.  Since the number of crossings where $\ell$ passes is odd, we have $0=1$, a contradiction.
\end{proof}

\noindent
{\it Proof of (2) of Theorem~$\ref{thm:R}$.}
Recall that
$$
[D](0,0,z,z^{-1}) = \sum_{\text{legal states}~\sigma} W_\sigma(0,0,z,z^{-1}) =
\sum_{\text{bad states}~\sigma} W_\sigma(0,0,z,z^{-1})
.$$
Since $D$ is non-orientable,
by Lemma~\ref{lem-4}, $[D](0,0,z,z^{-1})=0$.  Thus $R'_D(z)=0$.
This completes the proof of (2) of Theorem~$\ref{thm:R}$.

\begin{thm} \label{}
Let $D$ be a marked graph diagram.
Then
\begin{equation*}
S'_D=2^{N},
\end{equation*}
where $N$ is the number of connected components of the proper surface $F(D)$.
In particular, when $D$ is a marked graph diagram presenting a surface-link
with $N$ components, we have $S'_D=2^{N}$.
\end{thm}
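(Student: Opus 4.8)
The plan is to compute $S'_D=[D](1,1,0,0)$ by first determining which legal states survive the specialization $x=y=1,\ z=w=0$, and then counting them. Since $[D]=\sum_\sigma W_\sigma$ with $W_\sigma=\prod_c\nu(c,\sigma)$, and since $z=w=0$, a legal state $\sigma$ contributes a nonzero term only when $\nu(c,\sigma)\in\{x,y\}$ at \emph{every} crossing $c$. By the classification of legal labellings at a crossing (Fig.~\ref{fig-legal}) together with Definition~\ref{def-good-bad}, the values $z,w$ occur exactly at the ``bad'' configurations (this is the same grouping already used to evaluate $[D](0,0,z,z^{-1})$ on bad states), so $\nu(c,\sigma)\in\{x,y\}$ at every crossing precisely when $\sigma$ is a \emph{good} state, i.e.\ at each crossing the two arcs that lie opposite to one another (those joined by passing straight through) carry the same label. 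For such a $\sigma$ every factor is $x$ or $y$, hence $W_\sigma(1,1,0,0)=1$. Therefore $S'_D$ equals the number of good states of $D$, and the task reduces to showing this number is $2^{N}$.

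Next I would recast ``good'' as a connectivity condition. A state is by definition constant on each connected component of $\widehat D$; since marked vertices are not removed, the four arcs meeting at a marked vertex always receive the same label. The good condition imposes, in addition, that at each crossing the two opposite arcs agree. I would therefore consider the partition of the components of $\widehat D$ generated by declaring two components equivalent whenever they contain arcs that are opposite at some crossing. A labelling is a good state exactly when it is constant on each class, and conversely every such locally constant labelling is automatically legal: ``opposite arcs equal'' forces each crossing to be of all-same or diagonal type, in each of which every label occurs an even number of times. Consequently the number of good states is $2^{k}$, where $k$ is the number of classes of this partition.

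It then remains to identify $k$ with $N$. The partition above is precisely the connectivity obtained from the $4$-valent graph $|D|$ by resolving every crossing straight (joining the two strands that cross) while keeping each marked vertex as a $4$-valent vertex. This matches the connectivity of the banded link $L_-(D)\cup\mathcal B$: at a marked vertex the negative smoothing together with its band joins all four local arcs, and at a crossing the two strands pass straight through. Since $F(D)\cap\mathbb R^3_0=L_-(D)\cup\mathcal B$ and $F(D)$ deformation retracts onto this slice (the region $t\in[-1,0)$ is the product $L_-(D)\times[-1,0)$ and the region $t\in(0,1]$ is $L_+(D)\times(0,1]$, both collapsing to $t=0$ through the saddles), the connected components of $F(D)$ correspond bijectively to the classes of the partition. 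Hence $k=N$ and $S'_D=2^{N}$. The final assertion follows at once, because capping $F(D)$ with trivial minimal and maximal disks does not change the number of components, so a marked graph diagram presenting a surface-link with $N$ components has $F(D)$ with $N$ components.

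The main obstacle I expect is this last identification, namely making precise that the purely combinatorial connectivity coming from the good-state partition coincides with the number of connected components of the surface $F(D)$. One must check carefully that the negative smoothing plus band at a marked vertex connects all four local arcs—so that marked vertices behave, for connectivity purposes, exactly like the retained $4$-valent vertices of $\widehat D$—and that the product and saddle regions of $F(D)$ introduce no components beyond those already visible in the $t=0$ slice. By contrast, the reduction of $S'_D$ to counting good states and the verification that locally constant labellings are automatically legal are routine.
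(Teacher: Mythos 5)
Your proof is correct and takes essentially the same approach as the paper: specialize $z=w=0$ so only good states survive, observe each good state contributes $W_\sigma(1,1,0,0)=1$, and count the good states as $2^N$ by identifying them with labellings constant on the connected components of $F(D)$. The paper compresses the key identification into one sentence (``each component $D_i$ has the same label''), whereas you make explicit the connectivity argument through the banded link $L_-(D)\cup\mathcal B$ and the retraction of $F(D)$ onto its $t=0$ slice --- a useful elaboration of the same idea, not a different route.
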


\begin{proof}
Let $F(D) = F_1 \cup \dots \cup F_N$ be the proper surface associated with $D$, where $F_i$ $(i=1, \dots, N)$ is a connected component.  Let $D_i$ be the sub-diagram of $D$ corresponding to $F_i$.
Since $z=0, w=0$, we may consider only good states. Since the labels $0$ and $1$ do not change when we pass any classical crossing, each component $D_i$ $(i=1, \dots, N)$ has the same label.
So there exist $2^N$ good states. For each good state $\sigma$, $W_\sigma(1,1,0,0)=1$ and hence we see that $S'_D= 2^N$.
\end{proof}

Lipson's  $R^{\rm unori}_D(z)$ and $S^{\rm unori}_D(z)$ are different state  models for the same invariant of classical links.
Our invariant $R'_D(z)$ is a generalization of $R^{\rm unori}_D(z)$ to marked graph diagrams, and
$S'_D$ is a generalization of $S^{\rm unori}_D(z)$, evaluated with $z=1$, to marked graph diagrams.  As surface-link invariants, $R'_D(z)$ and $S'_D$ are different state  models for
the same invariants of orientable surface-links.  On the other hand, they are different invariants for non-orientable surface-links.


\section{Obstruction for $\Omega_4$ and $\Omega_4'$}


\begin{thm} \label{thm3}
For a marked graph diagram $D$, define $Q_D$ by
$$Q_D = (\frac{1+i}{\sqrt{2}})^{t_{+}(D)}[D](\frac{1}{\sqrt{2}},\frac{1}{\sqrt{2}},\frac{i}{\sqrt{2}},-\frac{i}{\sqrt{2}})
\in \mathbb C.$$
Then $Q_D$ is invariant under all Yoshikawa moves except $\Omega_4$ and $\Omega_4'$.
\end{thm}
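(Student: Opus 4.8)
The plan is to identify the substitution defining $Q_D$ with Lipson's $S$-model substitution at the primitive eighth root of unity $t = \frac{1+i}{\sqrt{2}} = e^{i\pi/4}$. Writing $t = \frac{1+i}{\sqrt{2}}$, so that $t^{-1} = \frac{1-i}{\sqrt{2}}$, one has $\frac{t+t^{-1}}{2} = \frac{1}{\sqrt{2}}$ and $\frac{t-t^{-1}}{2} = \frac{i}{\sqrt{2}}$; hence
$$(x,y,z,w) = \left(\tfrac{t+t^{-1}}{2}, \tfrac{t+t^{-1}}{2}, \tfrac{t-t^{-1}}{2}, -\tfrac{t-t^{-1}}{2}\right)$$
and the prefactor is $\left(\frac{1+i}{\sqrt{2}}\right)^{t_+(D)} = t^{t_+(D)}$. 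Thus $Q_D$ is exactly the marked-graph analogue of $S^{\rm unori}_D$ of Theorem~\ref{lipson2-thm} evaluated at $t = e^{i\pi/4}$, but \emph{without} imposing $z=w=0$. With this identification I would treat each Yoshikawa move in turn, the whole point being that this particular $t$ forces the relations $q_1,\dots,q_7$ (governing $\Omega_8$) to hold while leaving $p_1,\dots,p_4$ (governing $\Omega_4,\Omega_4'$) unsatisfied.

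First, for $\Omega_1,\Omega_2,\Omega_3$, I would verify that the substitution satisfies case (R2) of Proposition~\ref{prop1}: since $z = \frac{i}{\sqrt2}$ and $w = -\frac{i}{\sqrt2}$ we have $z = -w$, and $x^2 = y^2 = \frac12 = 1 + z^2$. By Proposition~\ref{prop1} the evaluated polynomial $[D]$ is then invariant under $\Omega_2$ and $\Omega_3$, and $t_+(D)$ is unchanged under these moves. For $\Omega_1$, the factor $t^{t_+(D)}$ compensates the change of $[D]$ under an R1-kink exactly as in the proof of Theorem~\ref{lipson-thm} (and of Theorems~\ref{thm1} and~\ref{thm2}): since the kink persists in the positive resolution, $t_+(D) = sw(D_+)$ changes by $\pm 1$ and is cancelled by the normalization.

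Next, for $\Omega_5,\Omega_6,\Omega_6',\Omega_7$ I would invoke Lemma~\ref{lem1}, which gives $[D]$-invariance for all $(x,y,z,w)$, together with the preservation of $t_+(D)$, to conclude that $Q_D$ is invariant. For $\Omega_8$ the key computation is $x^2 = y^2 = \frac12$ and $z^2 = w^2 = -\frac12$, so that $x^2+z^2 = x^2+w^2 = y^2+z^2 = y^2+w^2 = 0$ while $z^2 = w^2$; thus each of $q_2,\dots,q_7$ collapses to $0=0$, while $q_1$ holds since $z^2 = w^2$. By Lemma~\ref{lem2} this yields $\Omega_8$-invariance of $[D]$, and since $t_+(D)$ is preserved, of $Q_D$.

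Finally, I would observe that $Q_D$ genuinely fails to be invariant under $\Omega_4$ and $\Omega_4'$: the relations $p_1,\dots,p_4$ of Lemma~\ref{lem2} do not hold here, since $p_1 : xz = \frac{i}{2} \neq 0$. This non-vanishing is precisely what makes $Q_D$ an obstruction to $\Omega_4$ and $\Omega_4'$. The only genuinely delicate point in the argument is the $\Omega_1$ normalization --- confirming that $t_+(D) = sw(D_+)$ is the right exponent, varying by $\pm1$ under $\Omega_1$ but preserved under $\Omega_2,\dots,\Omega_8$; every remaining step is a routine evaluation once the substitution is recognized as $t = e^{i\pi/4}$.
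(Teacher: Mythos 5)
Your proof of the stated claim is correct and structurally the same as the paper's: case (R2) of Proposition~\ref{prop1} for $\Omega_2,\Omega_3$; Lemma~\ref{lem1} for $\Omega_5,\Omega_6,\Omega_6',\Omega_7$; the relations $q_1,\ldots,q_7$ for $\Omega_8$ (which you actually verify, via $x^2+z^2=x^2+w^2=y^2+z^2=y^2+w^2=0$ and $z^2=w^2$, whereas the paper only asserts they hold); and the normalization $t_+(D)$ for $\Omega_1$. The one place you genuinely diverge is the $\Omega_1$ step. The paper computes directly that adding a kink gives $[D']=(x+w)[D]=\frac{1-i}{\sqrt{2}}[D]$ while $t_+(D')=t_+(D)+1$, so the factors cancel. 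You instead observe that the substitution is Lipson's $S$-substitution $(x,y,z,w)=\bigl(\frac{t+t^{-1}}{2},\frac{t+t^{-1}}{2},\frac{t-t^{-1}}{2},-\frac{t-t^{-1}}{2}\bigr)$ at $t=e^{i\pi/4}$, so the kink factor is $x+w=t^{-1}$ (resp.\ $x+z=t$) exactly as in Theorem~\ref{lipson-thm}, and it is cancelled by the change of $\pm 1$ in $t_+(D)=sw(D_+)$. This identification is a nice conceptual gain --- it exhibits $Q_D$ as the marked-graph analogue of $S^{\rm unori}_D$ at a primitive eighth root of unity and imports Lipson's normalization for free --- and you correctly supply the one extra fact it needs, namely that the kink survives in the positive resolution so that $sw(D_+)$ changes by the sign of the kink.

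One caution about your closing paragraph: the theorem asserts only invariance under the moves other than $\Omega_4$ and $\Omega_4'$, and your argument that $Q_D$ ``genuinely fails'' to be invariant --- because $p_1:xz=\frac{i}{2}\neq 0$ --- is not a proof of non-invariance. The relations $p_1,\ldots,p_4$ are sufficient conditions extracted from particular local labellings; their failure leaves open the possibility that the discrepancies cancel globally for every pair of diagrams related by $\Omega_4$ or $\Omega_4'$. Establishing actual failure requires an explicit example, which the paper provides in Example~\ref{exmp:Omega4ind} ($Q_D=1$ versus $Q_{D'}=1+i$). Since this claim is not part of the statement you were asked to prove, it does not affect the correctness of your proof, but you should not present it as established.
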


\begin{proof}
From Lemma~\ref{lem1}, it has already been shown that the polynomial $[D](x,y,z,w)$ is  invariant under Yoshikawa moves $\Omega_5$, $\Omega_6$, $\Omega_6'$, and $\Omega_7$.
Since $x=\frac{1}{\sqrt{2}}$, $y=\frac{1}{\sqrt{2}}$, $z=\frac{i}{\sqrt{2}}$, and $w=-\frac{i}{\sqrt{2}}$,
the second condition (R2) of Proposition~\ref{prop1} holds, and $Q_D$ is
 invariant under Yoshikawa moves  $\Omega_2$ and $\Omega_3$.
The relations $q_1, \ldots, q_7$ also hold, and since $t_+(D)$ is preserved under $\Omega_8$, we see that $Q_D$
is invariant under $\Omega_8$.

Let $D' = \eBracket{\hcross\hcap+}$ and $D= \eBracket{\hcap+}$ be two marked graph diagrams related by a move $\Omega_1$.
Since $[D']= (x+w)[D] = \frac{1-i}{\sqrt{2}}[D]$ and $t_+(D') = 1 + t_+(D)$, we have
$Q_{D'} = \frac{1+i}{\sqrt{2}} \cdot \frac{1-i}{\sqrt{2}} Q_D = Q_D$.  Thus $Q_D$ is invariant under $\Omega_1$.
\end{proof}

\begin{exmp}\label{exmp:Omega4ind}
(1) Let $D$ and $D'$ be
admissible marked graph diagrams depicted in Fig.~\ref{Omega4ind}.
They are related by a Yoshikawa move $\Omega_4$.  By a direct calculation, we have
$[D](x,y,z,w)= 2x^2$ and $[D'] = 2x^2 + 2xz$.
Then $Q_D= 1$ and $Q_{D'}= 1+i$.

(2) Let $D$ and $D'$ be
admissible marked graph diagrams that are obtained, by changing over-under information at the classical crossings, from the diagrams
in Fig.~\ref{Omega4ind}.
They are related by a Yoshikawa move $\Omega_4'$, and
$Q_D= 1$ and $Q_{D'}= 1-i$.
\end{exmp}

\begin{figure}[h]
\begin{center}
\resizebox{0.45\textwidth}{!}{%
\includegraphics{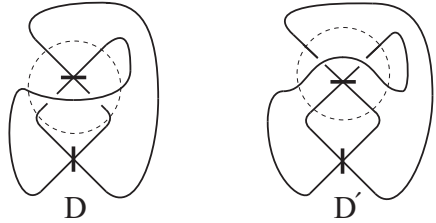} }
\caption{Two diagrams related by a Yoshikawa move $\Omega_4$}\label{Omega4ind}
\end{center}
\end{figure}

Form Theorem~\ref{thm3} and Example~\ref{exmp:Omega4ind}, we have the following.

\begin{coro}
The Yoshikawa moves $\Omega_4$ and $\Omega_4'$ are independent from the other Yoshikawa moves.
\end{coro}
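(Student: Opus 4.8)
The plan is to use the invariant $Q_D$ of Theorem~\ref{thm3} as an obstruction and argue by contradiction. The decisive feature of $Q_D$ is that it is invariant under \emph{every} Yoshikawa move except $\Omega_4$ and $\Omega_4'$; consequently, if two diagrams are related by a finite sequence drawn only from the remaining moves, their $Q$-values must agree, so any discrepancy in $Q$ certifies that $\Omega_4$ or $\Omega_4'$ was genuinely needed.

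First I would fix the meaning of independence. To say that $\Omega_4$ and $\Omega_4'$ are independent from the other moves means that neither can be realized as a finite composition of the remaining moves $\Omega_1, \Omega_2, \Omega_3, \Omega_5, \Omega_6, \Omega_6', \Omega_7, \Omega_8$; equivalently, for each of $\Omega_4$ and $\Omega_4'$ there exists a pair of admissible marked graph diagrams related by that single move but \emph{not} related by any finite sequence of the remaining moves. Exhibiting such a pair suffices.

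Next I would carry out the contradiction. Suppose $\Omega_4$ were a consequence of the remaining moves. Applying this hypothesis to the explicit pair $(D, D')$ of Example~\ref{exmp:Omega4ind}(1), the diagrams $D$ and $D'$ would be connected by a finite sequence of moves each of which preserves $Q$ by Theorem~\ref{thm3}, forcing $Q_D = Q_{D'}$. But Example~\ref{exmp:Omega4ind}(1) records $Q_D = 1$ and $Q_{D'} = 1 + i$, so $Q_D \neq Q_{D'}$, a contradiction; hence $\Omega_4$ is independent. The same argument, run with Example~\ref{exmp:Omega4ind}(2), where $Q_D = 1$ and $Q_{D'} = 1 - i$, establishes the independence of $\Omega_4'$.

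The only point requiring care — not a real obstacle but something to state explicitly — is that $Q_D$ is invariant under neither $\Omega_4$ nor $\Omega_4'$, so the phrase "the other Yoshikawa moves" in the statement must be read as excluding \emph{both} of them simultaneously. This is precisely the hypothesis under which Theorem~\ref{thm3} delivers invariance of $Q$, so the contradiction argument applies verbatim and the corollary follows.
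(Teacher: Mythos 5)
Your proposal is correct and is essentially the paper's own argument: the paper derives the corollary directly from Theorem~\ref{thm3} (invariance of $Q_D$ under all moves except $\Omega_4$, $\Omega_4'$) together with the computations $Q_D = 1$, $Q_{D'} = 1+i$ (resp.\ $1-i$) in Example~\ref{exmp:Omega4ind}, exactly as you do. Your closing remark --- that the obstruction only shows neither move is generated by the moves excluding \emph{both} $\Omega_4$ and $\Omega_4'$ --- is a correct and worthwhile clarification of what ``independent'' means here.
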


{\bf Acknowlegements.}
The second author was supported by JSPS KAKENHI Grant Number 26287013.
The third author was supported by Basic Science Research Program through the National Research Foundation of Korea(NRF) funded by the Ministry of Education, Science and Technology (2013R1A1A2012446).


\begin{thebibliography}{99}

\bibitem{As} M. Asada, An unknotting sequence for surface-knots represented by ch-diagrams and their genera, \textit{Kobe J. Math.} {\bf 18} (2001), 163--180.

\bibitem{Fox}
R. H. Fox, A quick trip through knot theory, in \textit{Toplogy of $3$-manifolds and Related Topics} (Prentice-Hall, Inc., Englewood Cliffs, N.J. 1962), 120--167.

\bibitem{JKL}
Y. Joung, J. Kim and S. Y. Lee, Ideal coset invariants for surface-links in $\Bbb R\sp 4$, \textit{J. Knot Theory Ramifications} {\bf 22} (2013), No. 9, 1350052 (25 pages).

\bibitem{Ka1}
S. Kamada, Non-orientable surfaces in $\mathbb R^4$, \textit{Osaka J. Math.} {\bf 26} (1989), 367--385.

\bibitem{Ka2}
S. Kamada, \textit{Braid and Knot Theory in Dimension Four}, Mathematical Surveys and Monographs Vol. 95, (AMS, 2002).

\bibitem{Kaw}
A. Kawauchi, \textit{A survey of knot theory}, (Birkh\"auser, 1996).

\bibitem{KSS}
A. Kawauchi, T. Shibuya and S. Suzuki, Descriptions on surfaces in four-space, I; Normal forms, \textit{Math. Sem. Notes Kobe Univ.} {\bf 10} (1982), 75--125.

\bibitem{KK}
C. Kearton and V. Kurlin, All 2-dimensional links in 4-space live inside a universal 3-dimensional polyhedron, \textit{Algebr. Geom. Topol.} {\bf 8} (2008), 1223--1247.

\bibitem{KJL1}
J. Kim, Y. Joung and S. Y. Lee, On generating sets of Yoshikawa moves for marked graph diagrams of surface-links, preprint.

\bibitem{Le2}
S. Y. Lee, Invariants of surface links in $\Bbb R\sp 4$ via classical link invariants, in \textit{Intelligence of low dimensional topology 2006}, Series on Knots Everything, Vol. 40 (World Scientific Publishing, Hackensack, NJ, 2007), 189--196.

\bibitem{Le4}
S. Y. Lee, Invariants of surface links in $\Bbb R\sp 4$ via skein relation, \textit{J. Knot Theory Ramifications} {\bf 17} (2008), 439--469.

\bibitem{Le1}
S. Y. Lee, Towards invariants of surfaces in $4$-space via classical link invariants, \textit{Trans. Amer. Math. Soc.} {\bf 361} (2009), 237--265.

\bibitem{ASL}
A. S. Lipson, Some more states models for link invariants, \textit{Pacific J. Math.} {\bf 152} (1992), 337--346.

\bibitem{Lo}
S. J. Lomonaco, Jr., The homotopy groups of knots I. How to compute the algebraic $2$-type,  \textit{Pacific J. Math.} {\bf 95} (1981),
349--390.

\bibitem{So}
M. Soma, Surface-links with square-type ch-graphs, \textit{Topology Appl.} {\bf 121} (2002), 231--246.

\bibitem{Sw}
F. J. Swenton, On a calculus for $2$-knots and surfaces in $4$-space,  \textit{J. Knot Theory Ramifications} {\bf 10} (2001),
1133--1141.

\bibitem{Yo}
K. Yoshikawa, An enumeration of surfaces in four-space, \textit{Osaka J. Math.} {\bf 31} (1994), 497--522.

\end{thebibliography}
\end{document}